\newtheorem{Thm}{Theorem}[section]
\newtheorem{Cor}[Thm]{Corollary}
\newtheorem{Def}[Thm]{Definition}
\newtheorem{DefThm}[Thm]{Definition-Theorem}
\newtheorem{Eg}[Thm]{Example}
\newtheorem{Lem}[Thm]{Lemma}
\newtheorem{Prop}[Thm]{Proposition}
\newtheorem{Qn}[Thm]{Question}
\newtheorem{Rmk}[Thm]{Remark}
\numberwithin{equation}{Thm}
\newcommand{\FF}{\mathbb{F}}
\newcommand{\NN}{\mathbb{N}}
\newcommand{\OO}{\mathscr{O}}
\newcommand{\plog}{\text{Log}}
\newcommand{\Qp}{\mathbb{Q}_p}
\newcommand{\QQ}{\mathbb{Q}}
\newcommand{\Tr}{\text{Tr}}
\newcommand{\WW}{\mathbb{W}}
\newcommand{\Zp}{\mathbb{Z}_p}
\newcommand{\ZZ}{\mathbb{Z}}
\newcommand{\PP}{\mathbb{P}}
\author{Christopher Davis}
\address{University of California, Irvine, Dept of
Mathematics, Irvine, CA 92697}
\email{davis@math.uci.edu}
\date{\today}
\author{Daqing Wan}
\address{University of California, Irvine, Dept of
Mathematics, Irvine, CA 92697}
\email{dwan@math.uci.edu}
\title{L-functions of $p$-adic characters}
\begin{document}

\begin{abstract}
We define a $p$-adic character to be a continuous homomorphism from $1 + t\FF_q[[t]]$ to $\ZZ_p^*$.  We use the ring of big Witt vectors over $\FF_q$ to exhibit a bijection between $p$-adic characters and sequences $(c_i)_{(i,p) = 1}$ of elements in $\ZZ_q$, indexed by natural numbers relatively prime to $p$, and for which $\lim_{i \rightarrow \infty} c_i = 0$.  To such a $p$-adic character we associate an $L$-function, and we prove that this $L$-function is $p$-adic meromorphic if the corresponding sequence $(c_i)$ is overconvergent.  If more generally the sequence is $c\log$-convergent, we show that the associated $L$-function is meromorphic in the open disk of radius $q^c$.  Finally, we exhibit examples of $c\log$-convergent sequences with associated $L$-functions which are not meromorphic in the disk of radius $q^{c+\epsilon}$ for any $\epsilon > 0$.
\end{abstract}

\maketitle

\section{Introduction}

 Let $\FF_q$ be a finite field of $q$ elements with characteristic $p$. Let $K=\FF_q(t)$ be the rational function field, 
 which is the function field of the projective line $\PP^1$ over $\FF_q$. Let $G_K$ denote the absolute Galois group of $K$, 
 namely, the Galois group of a fixed separable closure of $K$. Given a continuous $p$-adic representation 
 $$\rho: G_K \longrightarrow {\rm GL}_n(\ZZ_p),$$
  unramified on $U=\PP^1 -S$ with $S$ being a finite set of closed points of $\PP^1$, the L-function of the representation $\rho$ is defined by 
  $$L(\rho, s) := L(\rho/U, s) = \prod_{x\in |U|} \frac{1}{\det(I - \rho({\rm Frob}_x) s^{\deg(x)})} \in 1 +s \ZZ_p[[s]],$$
where $|U|$ denotes the set of closed points of $U$ and ${\rm Frob}_x$ denotes the geometric Frobenius conjugacy class at $x$. 
It is clear that the power series $L(\rho, s)$ is convergent (or analytic) in the open unit disk $|s|_p <1$. 

 A basic object of study in number theory is the L-function $L(\rho, s)$. The first  question about $L(\rho, s)$ is its possible analytic or 
 meromorphic continuation. This question has been studied extensively in the literature, however, it remains quite mysterious, even in the abelian case when rank $n=1$. 
We now briefly review the limited known results.  If $\rho$ is of finite order, then $L(\rho, s)$ is a rational function in $s$ (Brauer) and it satisfies 
a Riemann hypothesis (Weil). If,  in addition,  $\rho$ is irreducible and non-trivial, then $L(\rho, s)$ is a polynomial in $s$ (Artin's conjecture 
for function fields), which follows from Grothendieck's trace formula. 

If $\rho$ is a $p$-adic representation of infinite order, the situation is much more  complicated, even in the abelian case $n=1$. 
First, it is easy to construct examples such that $L(\rho, s)$ is not rational in $s$. For an arbitrary $p$-adic representation $\rho$, 
the L-function $L(\rho, s)$ is known to be meromorphic on the closed unit disc 
$|s|_p \leq 1$ and its unit root part (a rational function in $s$) is given by the Frobenius action on the $p$-adic \`etale cohomology of $\rho$. This was 
conjectured by Katz \cite{Katz73}, and proved by Crew \cite{Crew87} in the rank $n=1$ case, and in general proved by Emerton-Kisin \cite{EK01}. 
A stronger conjecture of Katz \cite{Katz73} stated that $L(\rho, s)$ is meromorphic in $|s|_p < \infty$.  This turned out to be false in general, even in the case $n=1$; see \cite{Wan96}. It suggests that 
the L-function $L(\rho, s)$ is much more complicated than what was thought before.

Motivated by his pioneering work on $p$-adic variation of zeta functions, Dwork \cite{Dwork71,Dwork73} conjectured that if $\rho$ is geometric 
(arising from the relative $p$-adic \'etale cohomology of a family of varieties), then the L-function $L(\rho, s)$ is $p$-adic meromorphic in $|s|_p < \infty$. This was proved 
by the second author \cite{Wan99,Wan001,Wan002}. It suggests that the class of geometric $p$-adic representations behaves reasonably well from the L-function point of view. 
We note that even in the geometric rank one case, although the L-function $L(\rho, s)$ is $p$-adic meromorphic in $|s|_p <\infty$, 
it is not expected to be rational in $s$, nor should one expect that it is $p$-adic entire (namely, the Artin entireness conjecture 
fails for non-trivial rank one geometric $p$-adic representations of $G_K$). One such example follows from Coleman's work \cite{Coleman97} in the elliptic modular case.

The aim of this paper is to re-examine this L-function from a new point of 
view via Witt vectors in the hope that it will provide new insight into this mysterious meromorphic continuation problem. 
We shall focus on the abelian case $n=1$. Then the representation $\rho$ factors through the maximal abelian quotient $G_K^{\rm ab}$: 
 $$\rho: G_K^{\rm ab}  \longrightarrow {\rm GL}_1(\ZZ_p) =\ZZ_p^*.$$
That is, $\rho$ is a $p$-adic character. By class field theory, $G_K^{\rm ab}$ is isomorphic to the profinite completion of the id\`ele class group of 
$K$. Precisely, in our case of the rational function field $K=\FF_q(t)$, we have 
$$G_K^{\rm ab} \equiv  \widehat{ \left\langle t \right\rangle}    \times \left(1 +\frac{1}{t}\FF_q\left[\left[\frac{1}{t}\right]\right]\right) \times \prod_{x \in |\mathbb{A}^1|} \FF_q[t]_x^*,$$
where $\FF_q[t]_x$ denotes the completion of $\FF_q[t]$ at the prime $x$ and $\widehat{ \left\langle t \right\rangle}$ denotes the profinite 
completion of the infinite cyclic multiplicative group generated by $t$. Since the character $\rho$ is unramified on $U = \PP^1 -S$, the 
restriction of $\rho$ to the $x$-factor $\FF_q[t]_x^*$ is trivial for all $x \in U$. To further simplify the situation, we shall assume that 
$S$ is the one point set consisting of the origin corresponding to the prime $t$ in $\FF_q[t]$. In this case, $\FF_q[t]_t = \FF_q[[t]]$. 
Twisting by a harmless finite character, we may further assume that $\rho$ factors through the character 
$$\chi: \FF_q[[t]]^*/\FF_q^* = 1 +t\FF_q[[t]] \longrightarrow \ZZ_p^*.$$
If $f(t) \in 1 +t\FF_q[t]$ is an irreducible polynomial, then one checks that 
$$\rho({\rm Frob}_{f(t)}) = \chi(f(t)).$$
Thus, the L-function $L(\rho, s)$ reduces to the following L-function of the $p$-adic character $\chi$
$$L(\chi, s) =\prod_{f} \frac{1}{1 -\chi(f)s^{\deg (f)}},$$
where $f$ now runs over all monic irreducible polynomials of $\FF_q[t]$ different from $t$. 
Expanding the product, the L-function of $\chi$ is also the following series 
$$L(\chi, s) =\sum_{g} \chi(g) s^{\deg (g)} \in 1+s \ZZ_p[[s]],$$
where $g$ runs over all monic polynomials in $\FF_q[t]$ different from $t$ (alternatively, one defines $\chi(t)=0$). 
A related function, which is of great interest to us,  is the 
characteristic series of $\chi$ defined by 
$$C(\chi, s) =\prod_{k=0}^{\infty} L(\chi, q^ks)  \in 1 +s \ZZ_p[[s]],$$
Equivalently, 
$$L(\chi, s) =\frac{C(\chi, s)}{C(\chi, qs)}.$$
Thus,  the L-function and the characteristic series determine each other.

In summary, the question that we study in this paper becomes the following completely self-contained 

\begin{Qn} Given a continuous $p$-adic character 
$$\chi: 1 +t\FF_q[[t]] \longrightarrow \ZZ_p^*,$$
when is its L-function $L(\chi, s)$ as defined above $p$-adic meromorphic in $s$? 
\end{Qn}

To give an idea of what we prove, we state our result in this introduction only in the simpler special case that $q=p$. 

\begin{Thm} There is a one-to-one correspondence between continuous $p$-adic characters 
$\chi: 1 +t\FF_p[[t]] \longrightarrow \ZZ_p^*$ and sequences $\pi =(\pi_i)_{(i,p)=1}$, 
where $\pi_i \in p\ZZ_p$ and $\lim_{i \rightarrow \infty} \pi_i=0$. Denote the associated character by $\chi_{\pi}$. For an irreducible polynomial $f(t) \in 1 +t \FF_p[t]$ 
with degree $d$, let $\bar{\lambda}$ denote a reciprocal root of $f(t)$ and let $\lambda$ denote the Teichm\"uller lifting of $\bar{\lambda}$. 
Then the character $\chi_{\pi}$ is given by 
$$\chi_{\pi}(f(t)) = \prod_{(i,p)=1} (1+\pi_i)^{Tr(\lambda^i)},$$
where $Tr$ denotes the trace map from $\ZZ_{p^d}$ to $\ZZ_p$. Assume that the sequence $\pi$ satisfies the $\infty\log$-condition 
$$\liminf_{i\rightarrow \infty}  \frac{ v_p(\pi_i)}{\log_p i} = \infty.$$
Then the characteristic power series 
$$C(\chi, s) = \prod_{k=0}^{\infty} L(\chi, p^ks)$$ 
is entire in $|s|_p < \infty$ and thus the 
$L$-function 
$$L(\chi_{\pi}, s)=\frac{C(\chi, s)}{C(\chi, ps) }
$$
is $p$-adic meromorphic in $|s|_p < \infty$. 
\end{Thm}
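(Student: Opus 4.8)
The plan is to realize $C(\chi,s)$ as a Fredholm determinant of a Dwork operator built from an explicit exponential series, and then to bound its Newton polygon from below; the $\infty\log$-condition enters only through a coefficient estimate for that series. First I would reduce to showing $C(\chi,s)$ is entire: since $L(\chi_\pi,s)=C(\chi,s)/C(\chi,ps)$ and $C(\chi,ps)$ is entire whenever $C(\chi,s)=\sum_m b_m s^m$ is, it is enough to prove $v_p(b_m)/m\to\infty$. Set $c_i:=\log(1+\pi_i)\in p\Zp$ (so $v_p(c_i)\geq 1$, and $\geq 2$ if $p=2$) and put
\[
  \Theta_\pi(X):=\exp\Big(\sum_{(i,p)=1}c_iX^i\Big)\in 1+X\Zp[[X]].
\]
Using $(1+\pi_i)^\mu=\exp(\mu\log(1+\pi_i))$ for Teichm\"uller $\mu$ and $\Tr(\lambda^i)=\sum_{j=0}^{d-1}\lambda^{ip^j}$, the character interpolates as $\chi_\pi(f)=\prod_{j=0}^{d-1}\Theta_\pi(\lambda^{p^j})$ for $f$ of degree $d$ with Teichm\"uller root $\lambda$. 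Plugging this into the exponential-sum expansion of $L(\chi,s)$ and using $\sum_{a\in\FF_{p^k}^*}\widehat{a}^{M}=p^k-1$ or $0$ according as $(p^k-1)\mid M$, I would obtain Dwork's trace formula for $\mathbb{G}_m$: with $\alpha:=\psi_p\circ M_{\Theta_\pi}$ acting on power series (where $\psi_p(\sum b_nX^n)=\sum b_{pn}X^n$ and $M$ denotes multiplication), $\sum_{a\in\FF_{p^k}^*}\prod_{j=0}^{k-1}\Theta_\pi(\widehat{a}^{p^j})=(p^k-1)\Tr(\alpha^k)$; telescoping $\prod_{k\geq 0}L(\chi,p^ks)$ then gives $C(\chi,s)=\det(1-s\alpha)=\sum_m b_ms^m$, the matrix of $\alpha$ in the basis $\{X^n\}$ having $(\ell,n)$-entry $(\Theta_\pi)_{p\ell-n}$ (and $0$ if $p\ell<n$).

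Next I would establish the key estimate: the $\infty\log$-condition forces $v_p\big((\Theta_\pi)_M\big)\gg\log_pM$, i.e. $v_p((\Theta_\pi)_M)\geq C\log_pM$ for $M$ large, for every $C$. This is elementary: a monomial $c_{i_1}\cdots c_{i_N}/N!$ with $i_1+\cdots+i_N=M$ (all $i_j$ prime to $p$) has valuation $\geq\sum_j v_p(c_{i_j})-v_p(N!)\geq\sum_j v_p(c_{i_j})-N/(p-1)$, and if this were $<C\log_pM$ then $N=O(\log_pM)$ (as each $v_p(c_{i_j})\geq 1$), so the largest part is $\geq M/N$, which is large, and the $\infty\log$-condition makes its $c$-valuation exceed $C\log_pM$ together with the $v_p(N!)$-term — a contradiction.

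The hard part will be to deduce $v_p(b_m)\gg m$ from this. Here $b_m=\pm\sum_{|I|=m}\det(\alpha|_{I\times I})$, so by the permutation expansion of determinants $v_p(b_m)\geq\min_{I,\sigma}\sum_{i\in I}v_p\big((\Theta_\pi)_{pi-\sigma(i)}\big)$, the minimum over $m$-element $I\subset\ZZ_{\geq 0}$ and permutations $\sigma$ of $I$ with $pi\geq\sigma(i)$ for all $i$. I would analyse the cycles of $\sigma$: along a cycle $i_1\to i_2\to\cdots\to i_k\to i_1$ the shifts $N_t:=pi_t-i_{t+1}\geq 0$ satisfy $\sum_{t=1}^k p^{k-t}N_t=(p^k-1)i_1$, which forces $\sum_t\phi(N_t)\geq Ck-O_C(1)$ for the valuation bound $\phi(M)=C\log_pM$ — a cycle cannot make most shifts vanish without putting $i_1$ in a high power of $p$, which in turn makes some shift as large as $p^{\,k}$. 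Summing over cycles gives $v_p(b_m)\geq Cm-O_C(1)$ for every $C$, hence $v_p(b_m)/m\to\infty$ and $C(\chi,s)$ is entire; the extremal configuration is the single geometric cycle $I=\{1,p,p^2,\ldots,p^{m-1}\}$, $\sigma=(1\ p\ p^2\ \cdots\ p^{m-1})$, whose only nonzero shift is $p^m-1$, so $\sum_i\phi(N_i)=\phi(p^m-1)\approx Cm$. The delicate points are ruling out permutations that ``hide'' the large elements of $I$ inside long chains of zero shifts cheaply, and keeping the additive losses $O_C(1)$ from being multiplied by the number of cycles.

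Finally, I would note that the very same Newton polygon bound, run with the weaker input $v_p(c_i)\gtrsim c\log_pi$, gives $v_p(b_m)\geq cm-O(1)$ and hence meromorphy of $L(\chi_\pi,s)$ in $|s|_p<p^c$ — the $c\log$-convergent statement — and the geometric cycle $\{1,p,\ldots,p^{m-1}\}$ shows the radius $p^c$ is optimal.
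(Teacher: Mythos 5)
Your overall architecture --- convert the character into the single power series $\Theta_\pi(X)=\exp\bigl(\sum c_iX^i\bigr)$, check that the $\infty\log$-condition on $(\pi_i)$ forces $v_p\bigl((\Theta_\pi)_M\bigr)\geq C\log_pM$ for every $C$, and then conclude meromorphy from a growth bound on the Fredholm determinant $\det(1-s\alpha)$ of the Dwork operator $\alpha=\psi_p\circ M_{\Theta_\pi}$ --- is sound and, up to replacing the exponential by the binomial series $B_{\pi}(\lambda)=\sum_j\binom{\lambda}{j}\pi^j$, it is the route taken here: the paper passes from $\chi_\pi$ to the series $\OO_{\pi}(\lambda)$, proves the $C\log$-convergence transfer (your second paragraph is essentially that computation), and then \emph{cites} the theorem of \cite{Wan96} (Theorem \ref{61312Thm}) that a $c\log$-convergent series has characteristic series entire in $|s|_p<q^c$. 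Two remarks on completeness relative to the statement: you never prove the one-to-one correspondence between characters and sequences $(\pi_i)$ (in the paper this is the Witt-vector isomorphism $1+t\FF_p[[t]]\cong\WW(\FF_p)\cong\prod_{(i,p)=1}\ZZ_p$ together with Lemmas \ref{Trace Lem} and \ref{Lem (1+p)^r}); you only use the displayed formula for $\chi_\pi$ as a definition. That half of the theorem needs an argument.

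The genuine gap is the last step, where you set out to reprove the \cite{Wan96} input rather than quote it. The formal identity $C(\chi,s)=\det(1-s\alpha)$ is fine, and your extremal cycle $\{1,p,\ldots,p^{m-1}\}$ correctly identifies the expected slope $Cm$. But the asserted lower bound $\sum_{i\in I}\phi\bigl(pi-\sigma(i)\bigr)\geq Cm-O_C(1)$ over all admissible $(I,\sigma)$ is exactly the hard theorem, and your sketch does not establish it. The per-cycle identity $\sum_t p^{k-t}N_t=(p^k-1)i_1$ does not by itself force $\sum_t\phi(N_t)\geq Ck-O_C(1)$: since $\phi$ is only logarithmic, a cycle can satisfy the identity with $\prod_{N_t\neq 0}N_t$ noticeably smaller than $p^k$ (e.g.\ chains $i\mapsto pi-1$ with many shifts equal to $1$, for which $\phi(1)$ may be as small as $1$ while you need a contribution of order $C$ per step), so the deficit per cycle is not obviously $O_C(1)$; and even granting a per-cycle bound of the form $Ck-O_C(1)$, a permutation of an $m$-element set can have up to $m$ cycles, so summing gives only $Cm-O_C(m)$, which is useless. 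You flag both of these points yourself but do not resolve them, and resolving them is precisely the content of Wan's theorem (whose proof in \cite{Wan96} does not proceed by this naive permutation expansion but by a decomposition/limiting argument). As written, the proposal therefore reduces the theorem to an unproven estimate; either supply a correct proof of that estimate or, as the paper does, invoke Theorem \ref{61312Thm} after verifying that $\OO_\pi(\lambda)$ is $C\log$-convergent for every $C>0$.
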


Furthermore, we show that the theorem is optimal in the sense that the $\infty\log$-condition cannot be weakened to a $c\log$-condition 
for any finite $c$. To prove the above theorem, we link the character $\chi_{\pi}$ via the binomial 
power series to a power series in $\lambda$ with a good convergence condition and then apply the results in \cite{Wan96}. 
Thus, our proof ultimately depends on Dwork's trace formula. It would be very interesting to find a self-contained proof of the 
above theorem without using Dwork's trace formula, as this would pioneer an entirely new (and likely motivic) approach. 

In the present paper, we only treat L-functions of the simplest nontrivial $p$-adic characters, that is, $p$-adic 
characters with values in $\ZZ_p^*$ ramified only at the origin. There are several interesting ways to extend 
the present work. One can consider $p$-adic characters ramified at several closed points (not just the origin $t$). 
One can consider $p$-adic characters with values in the unit group of other $p$-adic rings such as the 
two dimensional local ring $\ZZ_p[[T]]$. (This will be very useful in studying the variation of the L-function when 
the character $\chi$ moves in a $p$-adic analytic family; here $T$ is the analytic parameter.  
Two related examples which have been studied in depth are the eigencurves \cite{CM98} and $T$-adic L-functions \cite{LW09}.) One can replace 
the projective line $\PP^1$ by a higher genus curve or even a higher dimensional variety. One can also consider higher rank $p$-adic representations, instead of considering only $p$-adic characters. 
We hope to return to some 
of these further questions in later papers.

\subsection*{Notation and conventions}  Let $q = p^a$ denote a power of $p$.  Beginning in Section \ref{In terms of Witt section} we require $p > 2$.  For a ring $R$ (always assumed commutative and with unity), we denote by $W(R)$ the $p$-typical Witt vectors with coefficients in $R$, and we denote by $\WW(R)$ the big Witt vectors with coefficients in $R$.  For an explanation of these Witt vectors, see Section \ref{Witt intro}.  We write $\ZZ_p$ for the ring of $p$-adic integers, $\QQ_p$ for the field of $p$-adic numbers, $\QQ_q$ for the unramified degree $a$ extension of $\QQ_p$, $\ZZ_q$ for the ring of integers in $\QQ_q$, $\widehat{\QQ_p^{nr}}$ for the $p$-adic completion of the maximal unramified extension of $\QQ_p$, and $\widehat{\ZZ_p^{nr}}$ for the ring of integers in $\widehat{\QQ_p^{nr}}$.  When we have a fixed unramified extension $\QQ_q/\QQ_p$, we write $\sigma$ for the Frobenius map, the unique automorphism which induces the Frobenius in characteristic $p$.  It is a generator of the cyclic group $\text{Gal}(\QQ_q/\QQ_p)$.   We let $f_{\lambda}(t)$ denote a general irreducible polynomial in $1 + t\FF_q[t]$.  We call its degree $d$, we write $\overline{\lambda}$ for one of its reciprocal roots, and we write $\lambda$ for the corresponding Teichm\"uller lift in $\ZZ_{q^d}$.  In other words, $\lambda$ will denote the unique root of unity in $\ZZ_{q^d}$ with reduction modulo $p$ equal to $\overline{\lambda}$.  We write $v_p(x)$ to denote the $p$-adic valuation of $x$.  If $R$ is a topological ring, we let $R\langle t \rangle$ denote convergent power series with coefficients in $R$.  Unfortunately we will need both the $p$-adic logarithm and the classical base-$p$ logarithm.  We denote the $p$-adic logarithm by $\plog$ and the classical logarithm by $\log_p$.  

\section{Preliminaries}

In this section we introduce the objects we will study ($p$-adic characters and their associated $L$-functions) and we introduce one of the key tools we will use to study them (big Witt vectors).

\subsection{$p$-adic characters} \label{padic intro}  We begin by defining $p$-adic characters and their associated $L$-functions.

\begin{Def}
By \emph{$p$-adic character}, we will mean a nontrivial continuous homomorphism
\[
\chi: (1 + t \FF_q[[t]])^* \rightarrow \ZZ_p^*.
\]
\end{Def}

\begin{Rmk} \label{Rmk continuity}
When we refer to continuity in the preceding definition, we are using the $t$-adic topology on $(1 + t \FF_q[[t]])^*$ and the $p$-adic topology on $\ZZ_p^*$.  We check that if $y \in \ZZ_p^*$ is in the image of $\chi$, then $y \equiv 1 \mod p$; we call such an element a \emph{1-unit}.  Assume $x := 1 + tf(t)$ is some element of $(1 + t \FF_q[[t]])^*$, and let $y := \chi(x)$.  The reduction $y \mod p \in \FF_p$ is a unit, so we have $y \equiv \zeta \mod p$, where $\zeta$ is some $(p - 1)$-st root of unity.  The sequence $x, x^p, x^{p^2}, \ldots$ clearly converges $t$-adically to $1$.  The sequence $y, y^p, y^{p^2}, \ldots$ converges $p$-adically to $\zeta$.  Hence, if the character $\chi$ is to be continuous, we must have $\zeta = \chi(1) = 1$.  This shows that the image of $\chi$ contains only $1$-units.
\end{Rmk}

To a $p$-adic character, we associate an $L$-function as follows.  

\begin{Def} \label{L-function Def}
The $L$-function associated to a $p$-adic character $\chi$ is the formal power series associated to either and both of the following:
\begin{align}
L(\chi, s) &= \prod_{{f(t) \text{ irred poly,} \atop f(t) \equiv 1 \bmod t}} \frac{1}{1 - \chi(f(t)) s^{\text{deg }f}} \\
\intertext{or}
L(\chi, s) &= \exp \left( \sum_{k = 1}^{\infty} \frac{S_k(\chi)}{k} s^k \right), \label{EltEqn}
\end{align}
where 
\[
S_k(\chi) = \sum_{\overline{\lambda} \in \FF_{q^k}} \chi(f_{\lambda})^{\frac{k}{\deg\left(\overline{\lambda}\right)}}, \]
and where $f_{\lambda}$ denotes the irreducible polynomial with constant term $1$ and having ${\overline{\lambda}}$ as a reciprocal root. 
\end{Def}
The most basic question to ask about these functions is the following.

\begin{Qn}
When is $L(\chi,s)$ a $p$-adic meromorphic function in the variable $s$?
\end{Qn}

To approach this question, we will use Witt vectors to give a new characterization of $p$-adic characters.  The next section reviews Witt vectors and states the results we will use.  Later we apply these results to give a simple description of all $p$-adic characters.

\subsection{Big and $p$-typical Witt vectors} \label{Witt intro}
We now introduce big and $p$-typical Witt vectors.  These are both functors from rings to rings.  Witt vectors are used in the proofs of our main theorems, but the statements of the theorems do not require Witt vectors.  The reader who is unfamiliar with Witt vectors should focus on the special cases concerning characteristic $p$, as that is what we will need below.  Our references for this section are \cite{Hes05} and \cite{Ser79}, but there are many other places to read about Witt vectors.  See for example the book of Hazewinkel \cite[\S 17]{Haz78} or the exercises of \cite[Chapter 9]{Bou83}.  We also recommend the notes of Lenstra \cite{Len02} and the notes of Rabinoff \cite{Rab07}.

The big Witt vector functor has an imposing definition, but when it is evaluated on a perfect field of characteristic $p$, as it will be in our case, it is quite accessible.  

\begin{Def}
Let $R$ denote a ring.  The ring of \emph{big Witt vectors with coefficients in $R$}, denoted $\WW(R)$, is, as a set, $R^{\NN} = \{(r_1, r_2, \ldots) \mid r_i \in R\}$.  To uniquely describe $\WW(R)$ as a ring, we demand the following two properties.  
\begin{enumerate}
\item The \emph{ghost map} $w: \WW(R) \rightarrow R^{\NN}$ defined by \[(r_1,r_2,\ldots,r_i,\ldots) \mapsto (r_1, r_1^2 + 2r_2, \ldots, \sum_{d \mid i} dr_d^{i/d}, \ldots)
\] is a ring homomorphism, where the ring operations on the target are component-wise.
\end{enumerate}  
This uniquely determines $\WW(R)$ as a ring in the case that $R$ is $\ZZ$-torsion free.  To determine the ring operations in general, we need also the following functoriality property.
\begin{enumerate}[resume]
\item For any ring homomorphism $f: R \rightarrow S$, the map $\WW(f): \WW(R) \rightarrow \WW(S)$ given by $\WW(f): (r_1,r_2,\ldots) \mapsto (f(r_1),f(r_2),\ldots)$ is a ring homomorphism.
\end{enumerate}
\end{Def}

\begin{Rmk}
It requires proof that such a functor $\WW(-)$ exists.  See for instance Proposition 1.2 in \cite{Hes05}.
\end{Rmk}

The more classical version of Witt vectors are the \emph{$p$-typical Witt vectors}.  Again, the general definition may be imposing, but when evaluated on a perfect field of characteristic $p$, there is a down-to-earth description.

\begin{Def}
Let $R$ denote a ring.  The ring of \emph{$p$-typical Witt vectors with coefficients in $R$}, denoted $W(R)$, is, as a set, $R^{\NN} = \{(r_1, r_p, r_{p^2}, \ldots) \mid r_{p^i} \in R\}$.  The ring operations on $W(R)$ are again defined using the ghost map.
\begin{enumerate}
\item The \emph{ghost map} $w: W(R) \rightarrow R^{\NN}$ defined by \[(r_1,r_p,\ldots,r_{p^i},\ldots) \mapsto (r_1, r_1^p + pr_p, \ldots, \sum_{d \mid p^i} dr_d^{p^i/d}, \ldots)
\] is a ring homomorphism, where the ring operations on the target are component-wise.
\end{enumerate} 
This determines the ring operations on $W(R)$ uniquely when $R$ is $p$-torsion free.  The definition in general follows by making a functoriality requirement as in the big case.
\end{Def}

\begin{Rmk}
Note that big Witt vectors are written using a boldface $\WW$, while $p$-typical Witt vectors are written using a roman $W$.  Also, note that from the definition of ring operations in terms of the ghost map, it is clear that $W(R)$ is a quotient of $\WW(R)$, but it is not a subring.
\end{Rmk}

When the ring $R$ is perfect of characteristic $p$, we have the following classical definition of $p$-typical Witt vectors, taken from Serre \cite[Theorem II.5.3.]{Ser79}.

\begin{Def}
If $k$ is a perfect field in characteristic $p$, then the ring $W(k)$ is the unique (up to canonical isomorphism) $p$-adically complete discrete valuation ring with maximal ideal $(p)$ and residue field $k$.
\end{Def}  

\begin{Eg}
Thus $W(\FF_p) = \Zp$ and $W(\FF_{p^a}) = \ZZ_{p^a}$ (which by definition is the ring of integers in the unramified extension of $\Qp$ of degree $a$).  Similarly, if $\overline{\FF_p}$ denotes the algebraic closure of $\FF_p$, then $W(\overline{\FF_p}) = \widehat{\ZZ_p^{nr}}$.
\end{Eg}

We are now ready to give the simpler definition of big Witt vectors $\WW(R)$ when $R$ is perfect of characteristic $p$, or, more generally, a $\ZZ_{(p)}$-algebra.  (The latter is the same as requiring that all integers relatively prime to $p$ are invertible in $R$.)

\begin{Prop} \label{Prop idempotent}
Let $R$ denote a $\ZZ_{(p)}$-algebra.  Then we have an isomorphism of rings 
\[
\WW(R) \cong \prod_{i \in \NN, (i,p) = 1} W(R).
\] 
\end{Prop}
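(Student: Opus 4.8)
The plan is to exhibit the isomorphism explicitly, verify it when $R$ is torsion free, and then bootstrap to arbitrary $\ZZ_{(p)}$-algebras. For each $i$ with $(i,p)=1$, let $F_i\colon\WW(R)\to\WW(R)$ denote the $i$-th Frobenius, the natural ring endomorphism whose effect on ghost components is $(F_ix)_n=x_{in}$, and let $\pi_i\colon\WW(R)\to W(R)$ be $F_i$ followed by the canonical ring surjection $\WW(R)\twoheadrightarrow W(R)$ noted above; on ghost components $\pi_i$ sends $x$ to $(x_{ip^j})_{j\ge 0}$. Assembling these maps yields a natural ring homomorphism
\[
\Phi=(\pi_i)_{(i,p)=1}\colon\WW(R)\longrightarrow\prod_{(i,p)=1}W(R),
\]
and on ghost components $\Phi$ is nothing but the tautological regrouping bijection $R^{\NN}\xrightarrow{\ \sim\ }\prod_{(i,p)=1}R^{\{1,p,p^2,\dots\}}$ determined by the partition $\NN=\coprod_{(i,p)=1}i\cdot\{1,p,p^2,\dots\}$. (An equivalent reformulation of what we want: $\WW(R)$ carries a complete system of orthogonal idempotents $\{e_i\}_{(i,p)=1}$, obtained by pulling back such a system from $\WW(\ZZ_{(p)})$ along $\ZZ_{(p)}\to R$, with each $e_i\WW(R)\cong W(R)$.) We shall argue that $\Phi$ is an isomorphism.

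Suppose first that $R=\ZZ_{(p)}[x_s:s\in S]$ is a polynomial ring over $\ZZ_{(p)}$. Then $R$ is $\ZZ$-torsion free, so both ghost maps $w_{\WW(R)}$ and $w_{W(R)}$ are injective and hence $\Phi$ is injective; it remains to compare the images of the two ghost maps inside $R^{\NN}=\prod_{(i,p)=1}R^{\{1,p,p^2,\dots\}}$. Such an $R$ admits a Frobenius lift $\phi$ (raise the variables to their $p$-th powers), so Dwork's lemma applies and cuts out the image of $w_{\WW(R)}$ by congruences $g_{\ell m}\equiv\phi_\ell(g_m)$ modulo a power of $\ell$, one for each prime $\ell$ and each $m$. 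Because every prime $\ell\ne p$ is invertible in $R$, the conditions for $\ell\ne p$ are vacuous, so the only surviving constraints are $g_{pm}\equiv\phi(g_m)\bmod p^{1+v_p(m)}$, and these relate $g_n$ to $g_{n'}$ only when $n'/n$ is a power of $p$. Hence the big-Witt congruences decouple into the product, over $i$ prime to $p$, of the $p$-typical Dwork congruences on the string $(g_i,g_{ip},g_{ip^2},\dots)$; equivalently, the image of $w_{\WW(R)}$ corresponds under the regrouping bijection to the product of the images of the $w_{W(R)}$. Thus $\Phi$ is surjective, hence an isomorphism, in this case.

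For an arbitrary $\ZZ_{(p)}$-algebra $R$, pick a surjection $\widetilde R\twoheadrightarrow R$ with $\widetilde R$ a polynomial ring over $\ZZ_{(p)}$, and set $I=\ker(\widetilde R\to R)$. Naturality of $\Phi$ gives a commutative square in which the vertical maps $\WW(\widetilde R)\to\WW(R)$ and $\prod_{(i,p)=1}W(\widetilde R)\to\prod_{(i,p)=1}W(R)$ are surjective, each with kernel the set of Witt vectors all of whose coordinates lie in $I$. Since $\Phi_{\widetilde R}$ and $\Phi_{\widetilde R}^{-1}$ are given coordinate-wise by universal polynomials with coefficients in $\ZZ_{(p)}$ and no constant term, they both preserve the property ``all coordinates in $I$'', so $\Phi_{\widetilde R}$ identifies the two kernels; a short diagram chase then upgrades the isomorphism $\Phi_{\widetilde R}$ to an isomorphism $\Phi_R$, completing the proof. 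The only substantive step, and the sole place where the hypothesis on $R$ is used, is the congruence decoupling in the torsion-free case; a reader content to cite it may instead appeal to Hazewinkel \cite{Haz78} or Hesselholt \cite{Hes05}, after which everything is formal.
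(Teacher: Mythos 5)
Your proof is correct, but it takes a different (and far more self-contained) route than the paper, which simply cites \cite[Proposition 1.10]{Hes05} and sketches the idea: prescribe the ghost components of a complete system of orthogonal idempotents $e_i$ (the indicator functions of the classes $i\{1,p,p^2,\dots\}$) and lift them to genuine Witt vectors using the invertibility of every prime $\ell\neq p$ in $R$. You instead write down the isomorphism explicitly as $\Phi=(\mathrm{pr}\circ F_i)_{(i,p)=1}$, identify it on ghost components with the regrouping bijection, and prove surjectivity in the torsion-free case by noting that Dwork's congruences at primes $\ell\neq p$ are vacuous (as $\ell^{v_\ell(n)}R=R$), so the surviving $p$-congruences decouple along the classes $i\{1,p,p^2,\dots\}$ into precisely the $p$-typical Dwork conditions; the descent to arbitrary $\ZZ_{(p)}$-algebras via a polynomial presentation, using that $\Phi$ and $\Phi^{-1}$ are given by constant-term-free universal polynomials over $\ZZ_{(p)}$, is the standard functorial bootstrap and is carried out correctly. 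Both arguments turn on the same fact, namely the invertibility of the primes $\ell\neq p$; yours buys an explicit formula for the isomorphism (which, incidentally, makes the paper's later remark $[\lambda]\mapsto([\lambda^i])_{(i,p)=1}$ immediate) at the price of invoking Dwork's lemma in both its big and $p$-typical forms. One small point to tidy: for $\ell\neq p$ you never actually construct the Frobenius lift $\phi_\ell$ needed to state Dwork's lemma; you should remark that any endomorphism (say the identity) qualifies, since the defining condition $\phi_\ell(a)\equiv a^\ell\bmod\ell R$ is itself vacuous when $\ell$ is a unit in $R$.
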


\begin{proof}
For a proof, see \cite[Proposition 1.10]{Hes05}.  The idea is to prescribe ghost components of many mutually orthogonal idempotents, and then to use the fact that all primes $l \neq p$ are invertible in $R$ to find Witt vectors with the prescribed ghost components. 
\end{proof}

\begin{Rmk}
Let $U_j \subseteq \WW(R)$ denote the ideal $U_j = \left\{\underline{x} \in \WW(R) \mid x_i = 0 \text{ for } i < j  \right\}$.  These ideals generate a topology on $\WW(R)$ called the $V$-adic topology.  Under the isomorphism in Proposition \ref{Prop idempotent}, these ideals correspond to $U_j' \subseteq \prod_{(i,p) = 1} W(R)$, where 
\[U_j' = \left\{(\underline{x}_i) \in \prod_{(i,p) = 1} 
W(R) \mid \text{ the $p^k$ component of $\underline{x}_i$ equals zero for all  $ip^k < j$} \right\}.\]
\end{Rmk}

\begin{Eg}
It follows immediately from the proposition that as rings
\[
\WW(\FF_q) \cong \prod_{i \in \NN, (i,p) = 1} \ZZ_q.
\]
We are also interested in the topology on these rings.  Viewing an element $\alpha \in \ZZ_q$ as a Witt vector $\underline{x} \in W(\FF_q)$, we have that $p^i \mid \alpha$ if and only if the first $i$ coordinates of $\underline{x}$ are zero: $x_1,\ldots,x_{p^{i-1}} = 0$.  (This follows from the fact that multiplication by $p$ corresponds to shifting the Witt vector coordinates to the right and raising each component to the $p$-th power.  See \cite[Lemma 1.12]{Hes05} or \cite[Section II.6]{Ser79}.)  It is now easy to see that the $V$-adic topology described above is the same as the product topology on $\prod \ZZ_q$, where each component is equipped with the $p$-adic topology.
\end{Eg}

The following is yet another description of $\WW(R)$.  It is not as simple as the above description, but it hints at how we will use Witt vectors in our work on $p$-adic characters.

\begin{DefThm} \label{DefThm Lambda}
For any ring $R$, let $\Lambda(R) := 1 + tR[[t]]$ consist of power series with coefficients in $R$ with constant term $1$.  View $\Lambda(R)$ as a group under multiplication.  View $\WW(R)$ as a group under addition.  Then the map $E: \WW(R) \rightarrow \Lambda(R)$ defined by
\[
E: (r_1,r_2,\ldots) \mapsto \prod_{i = 1}^{\infty} (1 - r_i t^i) 
\]
is a group isomorphism.  If we view $\WW(R)$ as a topological group using the $V$-adic topology described above, and we view $\Lambda(R)$ as a topological group using the $t$-adic topology, then the isomorphism is a homeomorphism.
\end{DefThm}

\begin{Rmk}
There are four different reasonable normalizations for $E$.  These can be obtained by replacing $1 - r_i t^i$ above with $(1 \pm r_i t^i)^{\pm 1}$.  We have chosen the normalization which gives us easiest access to reciprocal roots.
\end{Rmk}

\begin{proof}
See \cite[Proposition 1.14]{Hes05} for everything except the continuity claims, and these are obvious.  The proof there is for a different normalization, but this doesn't matter.
\end{proof}

In either $p$-typical or big Witt vectors, we have a notion of Teichm\"uller lift.

\begin{Def}
If $r \in R$ is any element, we let $[r] \in W(R)$ or $\WW(R)$ denote the Witt vector with components $(r,0,0,\ldots)$.  This Witt vector is called the Teichm\"uller lift of $r$.  We have $[rs] = [r][s]$, but of course $[\cdot]: R \rightarrow W(R)$ is not a ring homomorphism (since traditionally $R$ is characteristic $p$ and $W(R)$ is characteristic zero).
\end{Def}

\begin{Rmk}
For a nonzero element $x \in \FF_{q}$, the Teichm\"uller lift $[x] \in \ZZ_q$ is the unique $(q-1)$-st root of unity which is a lift of $x$.  This follows from the equalities $[x]^{q-1} = [x^{q-1}] = [1] = 1$.
\end{Rmk}

\begin{Rmk}
In general it is not so easy to describe explicitly the map $\WW(R) \rightarrow \prod_{(i,p) = 1} W(R)$ from Proposition \ref{Prop idempotent}.  However, for the case of Teichm\"uller lifts, we can describe it explicitly.  In that case, we have $[\lambda] \mapsto ([\lambda^i])_{(i,p) = 1}$.  To prove this, work in terms of ghost components, using the explicit description from \cite[Proposition 1.10]{Hes05}.
\end{Rmk}

\begin{Eg} \label{Eg commutative diagram}
We have natural inclusions $1+t\FF_q[[t]] \subseteq 1+t \overline{\FF_q}[[t]]$ and $\WW(\FF_q) \subseteq \WW(\overline{\FF_q})$.  The latter corresponds to the inclusion
\[
\prod_{(i,p) = 1} \ZZ_q \subseteq \prod_{(i,p) = 1} \widehat{\ZZ_p^{\text{nr}}}.
\]
This yields a commutative diagram 
\[
\xymatrix{
1+t\FF_q[[t]] \ar[r]\ar[d] & 1 + t\overline{\FF_q}[[t]] \ar[d] & (1-\overline{\lambda}t) \ar[d] \\
\prod_{(i,p) = 1} \ZZ_q \ar[r] & \prod_{(i,p) = 1} \widehat{\ZZ_p^{\text{nr}}} & (\lambda,\lambda^2,\ldots),
}
\]
where we write $\lambda$ for the Teichm\"uller lift of $\overline{\lambda}$.

Consider now an irreducible degree $d$ polynomial $f(t) \in 1 + t\FF_q[t]$ with reciprocal root $\overline{\lambda}$.  This polynomial can be factored as $(1 - \overline{\lambda} t) (1 - \overline{\lambda}^q t) \cdots (1 - \overline{\lambda}^{q^{d-1}} t).$  The conjugates of the Teichm\"uller lift $\lambda$ over $\ZZ_q$ are $\lambda, \lambda^q, \ldots, \lambda^{q^{d-1}}$.  (Proof.  They are roots of unity and have the correct reductions modulo $p$.)  The above commutative diagram then shows that the polynomial $f(t)$ corresponds to the element $(\Tr(\lambda^i))_{(i,p) = 1} \in \prod_{(i,p) = 1} \ZZ_q$, where $\Tr$ denotes the trace from $\ZZ_{q^d}$ to $\ZZ_q$.
\end{Eg}

\section{Characters in terms of Witt vectors} \label{In terms of Witt section}

We begin this section with a result which gives us a new description of $p$-adic characters.  It follows directly from combining Definition-Theorem \ref{DefThm Lambda} with Proposition \ref{Prop idempotent}.  

\begin{Cor} \label{Witt corollary}
Giving a continuous homomorphism $\chi: (1+t\FF_q[[t]])^* \rightarrow \ZZ_p^*$ is the same as giving a continuous homomorphism 
\[
\prod_{i \in \NN, (i,p) = 1} \ZZ_q \rightarrow \ZZ_p^*.
\]
\end{Cor}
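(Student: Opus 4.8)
The plan is to deduce Corollary \ref{Witt corollary} by chaining together the two isomorphisms already established in the preliminaries, checking only that the intermediate topologies match. First I would invoke Definition-Theorem \ref{DefThm Lambda} applied to the perfect field $R = \FF_q$ of characteristic $p$: it gives a group isomorphism $E^{-1}: \Lambda(\FF_q) = 1 + t\FF_q[[t]] \xrightarrow{\sim} \WW(\FF_q)$ (noting that $(1 + t\FF_q[[t]])^* = 1 + t\FF_q[[t]]$ since any power series with constant term $1$ is already a unit), and moreover this is a homeomorphism when the source carries the $t$-adic topology and the target the $V$-adic topology. Next I would invoke Proposition \ref{Prop idempotent} with $R = \FF_q$ (which is a $\ZZ_{(p)}$-algebra, indeed a perfect field) to get a ring isomorphism $\WW(\FF_q) \cong \prod_{(i,p) = 1} W(\FF_q) = \prod_{(i,p)=1} \ZZ_q$, together with the identification of topologies recorded in the Example following the Remark on $U_j'$: the $V$-adic topology on $\WW(\FF_q)$ corresponds to the product topology on $\prod_{(i,p)=1} \ZZ_q$ with each factor $p$-adic. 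Composing, we obtain a topological group isomorphism $(1 + t\FF_q[[t]])^* \cong \prod_{(i,p)=1} \ZZ_q$, so precomposition with this isomorphism sets up a bijection between continuous homomorphisms out of the former and continuous homomorphisms out of the latter. Since the isomorphism is a group isomorphism, it automatically respects the homomorphism condition, and since it is a homeomorphism, it respects continuity; that is all that is asserted.

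The only genuine content beyond citing the two results is bookkeeping about the topology, so I would spell that out in one or two sentences: a map $\prod_{(i,p)=1} \ZZ_q \to \ZZ_p^*$ is continuous for the product topology precisely when, for every open neighborhood of $1$ in $\ZZ_p^*$, the preimage contains a basic open set, i.e. one pinning down finitely many coordinates to small residues; translating through $E$ and the idempotent decomposition, this matches exactly the statement that a character of $1 + t\FF_q[[t]]$ is continuous for the $t$-adic topology iff it is trivial on $1 + t^N\FF_q[[t]]$ for some $N$. Both sides being abelian topological groups, there is nothing subtle. I would not belabor the ``nontrivial'' clause in the definition of $p$-adic character, since it transports across any isomorphism without comment.

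I expect there to be essentially no obstacle: the proof is a one-line composition, and the excerpt even flags this (``It follows directly from combining Definition-Theorem \ref{DefThm Lambda} with Proposition \ref{Prop idempotent}''). The one place where a careless argument could go wrong is conflating the $V$-adic topology on $\WW(\FF_q)$ with some other natural topology, or forgetting that $W(\FF_q) = \ZZ_q$ genuinely carries the $p$-adic topology under the Witt-coordinate description — but both of these points are settled by the Examples already in the text, so I would simply cite them. Accordingly my ``proof'' would be three or four sentences: state the composite isomorphism, note it is a homeomorphism by the two cited results, and conclude that precomposition gives the desired bijection of continuous homomorphisms.
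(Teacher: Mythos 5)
Your argument is exactly the paper's: compose the homeomorphism $E$ of Definition-Theorem \ref{DefThm Lambda} with the idempotent decomposition of Proposition \ref{Prop idempotent} (plus the identification of the $V$-adic topology on $\WW(\FF_q)$ with the product of $p$-adic topologies on $\prod_{(i,p)=1}\ZZ_q$), and then precompose. That part is complete and correct, and it is all the paper does.

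One sentence in your topological bookkeeping is wrong, though: continuity of $\chi$ is \emph{not} equivalent to $\chi$ being trivial on $1+t^N\FF_q[[t]]$ for some $N$. That condition would force $\chi$ to factor through the finite group $(1+t\FF_q[[t]])/(1+t^N\FF_q[[t]])$ and hence have finite order, whereas the characters the paper constructs typically have infinite order. Continuity only requires that for each neighborhood $1+p^k\ZZ_p$ of $1$ there exist $N=N(k)$ with $\chi(1+t^{N}\FF_q[[t]])\subseteq 1+p^k\ZZ_p$, with $N$ depending on $k$. Since your actual proof only uses that the composite isomorphism is a homeomorphism --- so that precomposition preserves continuity tautologically --- this misstatement is not load-bearing; just delete or correct that sentence.
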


A first step towards understanding such continuous homomorphisms is to understand the component homomorphisms $\ZZ_q \rightarrow \ZZ_p^*$.  Before describing these, we describe the homomorphisms $\ZZ_q \rightarrow \ZZ_p$.

\begin{Lem} \label{Trace Lem}
For any continuous group homomorphism $\phi: \ZZ_q \rightarrow \ZZ_p$, there exists a unique element $c \in \ZZ_q$ such that $\phi: \alpha \mapsto \Tr(c\alpha)$, where $\Tr$ is the trace map from $\ZZ_q$ to $\ZZ_p$.  
\end{Lem}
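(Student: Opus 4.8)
The plan is to reinterpret $\phi$ as a $\ZZ_p$-linear functional on $\ZZ_q$ and then to invoke the fact that the trace form of the unramified extension $\QQ_q/\QQ_p$ is unimodular over $\ZZ_p$.

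First I would observe that any continuous group homomorphism $\phi\colon \ZZ_q \to \ZZ_p$ is automatically $\ZZ_p$-linear. Additivity makes $\phi$ a $\ZZ$-module map, and for $r \in \ZZ_p$ we may write $r = \lim_n r_n$ with $r_n \in \ZZ$; continuity of $\phi$ and of multiplication by elements of $\ZZ_q$ then gives $\phi(r\alpha) = \lim_n r_n\phi(\alpha) = r\phi(\alpha)$. Conversely every $\ZZ_p$-linear map $\ZZ_q \to \ZZ_p$ is a continuous group homomorphism. So the lemma is equivalent to the assertion that the $\ZZ_p$-linear map
\[
\Theta\colon \ZZ_q \longrightarrow \mathrm{Hom}_{\ZZ_p}(\ZZ_q,\ZZ_p), \qquad c \longmapsto \bigl(\alpha \mapsto \Tr(c\alpha)\bigr),
\]
is an isomorphism. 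Injectivity of $\Theta$ is exactly the nondegeneracy of the trace form $(x,y)\mapsto \Tr(xy)$, which holds because $\QQ_q/\QQ_p$ is separable; this already yields the uniqueness of $c$.

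To get surjectivity I would reduce modulo $p$. Both source and target are free $\ZZ_p$-modules of rank $a$, and since $\ZZ_q$ is finite free over $\ZZ_p$ there is a natural identification $\mathrm{Hom}_{\ZZ_p}(\ZZ_q,\ZZ_p)\otimes_{\ZZ_p}\FF_p \cong \mathrm{Hom}_{\FF_p}(\FF_q,\FF_p)$ under which $\Theta \otimes_{\ZZ_p}\FF_p$ becomes the map $\bar c \mapsto \bigl(\bar\alpha \mapsto \Tr_{\FF_q/\FF_p}(\bar c\,\bar\alpha)\bigr)$; here one uses that $\Tr_{\ZZ_q/\ZZ_p}$ reduces mod $p$ to $\Tr_{\FF_q/\FF_p}$ (both being sums over the Galois group, compatibly with reduction, since $\ZZ_q = W(\FF_q)$). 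The extension $\FF_q/\FF_p$ is separable, so its trace form is nondegenerate, and hence $\Theta\otimes_{\ZZ_p}\FF_p$ is an injective $\FF_p$-linear map between spaces of equal finite dimension, i.e.\ an isomorphism. By Nakayama's lemma $\Theta$ is surjective, and a surjection of free $\ZZ_p$-modules of equal finite rank is an isomorphism.

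The one genuine input here is the surjectivity step: it says the trace pairing is not merely nondegenerate over $\QQ_q$ but unimodular over $\ZZ_q$ — equivalently, that the codifferent of $\ZZ_q/\ZZ_p$ is all of $\ZZ_q$ — and this is precisely where unramifiedness of $\QQ_q/\QQ_p$ is used. The reduction-mod-$p$ argument above is the cleanest way I know to package it; one could alternatively quote the computation of the different of an unramified extension directly.
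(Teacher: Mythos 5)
Your proof is correct and is essentially the paper's argument: both reduce the statement to the invertibility over $\ZZ_p$ of the Gram matrix $(\Tr(e_ie_j))$ of the trace pairing on a $\ZZ_p$-basis, which holds because its reduction mod $p$ is the trace form of the separable extension $\FF_q/\FF_p$ and is therefore nonsingular. Your packaging of that step via Nakayama's lemma, and your explicit justification that continuity plus additivity forces $\ZZ_p$-linearity (a point the paper leaves implicit), are only stylistic differences from the paper's direct matrix computation.
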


\begin{proof}
We know from \cite[Section II.5]{Ser79} that as topological groups $\ZZ_q \cong \oplus_{i=1}^{a} \ZZ_p$. 
Fix a basis $e_1, \ldots, e_a$ of $\ZZ_q$ as a $\ZZ_p$-module.  Write $\phi(e_i) = r_i$.   We want to find $c \in \ZZ_q$ such that for any $b \in \ZZ_q$, we have $\phi(b) = \Tr(cb)$.  Write $b = b_1 e_1 + \cdots + b_a e_a$, where each $b_i \in \ZZ_p$.  Writing $c = c_1 e_1 + \cdots + c_a e_a$, our goal now becomes to find the elments $c_i \in \ZZ_p$.  We want
\begin{align*}
\phi(b) &= \Tr(cb) \\
\phi(b_1 e_1 + \cdots + b_a e_a) &= \Tr(cb) \\
b_1 r_1 + \cdots + b_a r_a &= \sum c_i b_j \Tr(e_i e_j).
\end{align*}
Considering the case $b_j = 1$ and $b_i = 0$ for $i \neq j$, we see that we must find $c_i$ so that 
\begin{equation} \label{matrix equation}
r_j = \sum_{i = 1}^a c_i \Tr(e_i e_j).
\end{equation}
In fact, if we find such $c_i$, then we are done.  (Simply compare the coefficients of $b_i$ above.)  We now show that $c_i$ satisfying (\ref{matrix equation}) exist and are uniquely determined.

Reducing everything modulo $p$, we know that $\overline{e_1},\cdots,\overline{e_a}$ is a basis for $\FF_{q}/\FF_p$.  This is a separable extension, and so 
the matrix with $(i,j)$-entry $\Tr(\overline{e_i} \overline{e_j})$ is invertible.  (See for example page 50 of \cite{Ser79}.)  Hence the determinant of the matrix with $(i,j)$-entry $\Tr(e_i e_j)$ is nonzero modulo $p$, hence it is invertible in $\ZZ_p$.  Hence we can find $c_i \in \ZZ_p$ which satisfy (\ref{matrix equation}) for all $j$, and these $c_i$ are uniquely determined. 
\end{proof}

We will study maps $\ZZ_q \mapsto \ZZ_p^*$ by factoring them as $\ZZ_q \rightarrow \ZZ_p \rightarrow \ZZ_p^*$.  The previous lemma concerned the first map in this composition, the next lemma concerns the second map.

\begin{Lem} \label{Lem (1+p)^r}
Assume $p > 2$.  Let $\pi \in p^n\ZZ_p$ for $n \geq 1$.  There exists a unique element $r \in p^{n-1}\ZZ_p$ such that $1 + \pi = (1 + p)^r$.
\end{Lem}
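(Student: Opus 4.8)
The plan is to use the $p$-adic logarithm and exponential to define $r$ explicitly and then verify uniqueness. First I would set $r := \plog(1 + \pi)/\plog(1 + p)$, which makes sense because $\plog(1+p)$ is a nonzero element of $p\ZZ_p$ (indeed $v_p(\plog(1+p)) = 1$ when $p > 2$, since $\plog(1+p) = p - p^2/2 + \cdots$ and each term after the first has valuation strictly greater than $1$). Similarly $\plog(1 + \pi)$ converges since $\pi \in p^n\ZZ_p \subseteq p\ZZ_p$, and the same estimate on the series $\plog(1+\pi) = \pi - \pi^2/2 + \cdots$ gives $v_p(\plog(1+\pi)) = v_p(\pi) \geq n$ (here is where $p > 2$ is needed, to ensure $v_p(\pi^k/k) > v_p(\pi)$ for $k \geq 2$). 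Hence $v_p(r) = v_p(\plog(1+\pi)) - v_p(\plog(1+p)) \geq n - 1$, so $r \in p^{n-1}\ZZ_p$ as required.

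Next I would check that this $r$ actually satisfies $1 + \pi = (1+p)^r$. The power $(1+p)^r$ for $r \in \ZZ_p$ is defined as $\exp(r \cdot \plog(1+p))$, provided $v_p(r \cdot \plog(1+p)) \geq 1$ so that $\exp$ converges; this holds since $v_p(r \plog(1+p)) \geq (n-1) + 1 = n \geq 1$. Then $\exp(r \cdot \plog(1+p)) = \exp(\plog(1+\pi)) = 1 + \pi$, using that $\exp \circ \plog$ is the identity on $1$-units (again valid because $v_p(\plog(1+\pi)) \geq n \geq 1$ puts us in the domain where $\exp$ and $\plog$ are mutually inverse). So existence is established.

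For uniqueness, suppose $r, r' \in p^{n-1}\ZZ_p$ both satisfy $(1+p)^r = (1+p)^{r'} = 1 + \pi$. Applying $\plog$ gives $r \cdot \plog(1+p) = r' \cdot \plog(1+p)$, and since $\plog(1+p) \neq 0$ we may cancel it in the integral domain $\ZZ_p$ (or $\QQ_p$) to conclude $r = r'$. This completes the argument.

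I do not expect a genuine obstacle here; the only point requiring care is the bookkeeping of $p$-adic valuations to confirm that all the logarithm and exponential series converge and that $\exp$ and $\plog$ are mutually inverse on the relevant disk — this is exactly where the hypothesis $p > 2$ enters, since for $p = 2$ one has $v_2(\plog(1+2)) = v_2(\log 3) = 2 \neq 1$ and more importantly the estimate $v_p(\pi^k/k) > v_p(\pi)$ can fail, breaking both the valuation computation for $r$ and the bijectivity of $\exp$ on $1+p\ZZ_p$. If one wanted to avoid logarithms entirely, an alternative is a direct successive-approximation argument: build $r$ one $p$-adic digit at a time, checking that at each stage the congruence $1 + \pi \equiv (1+p)^{r_k} \bmod p^{n+k}$ determines the next digit uniquely, again using $p > 2$ to control the relevant binomial coefficients; but the logarithm approach is cleaner and I would present that.
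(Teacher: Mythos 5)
Your proof is correct, but it takes a genuinely different route from the paper's. The paper argues digit by digit: it reduces to showing that for each $c \in \{0,1,\ldots,p-1\}$ there is a unique class $r \in p^{n-1}\ZZ_p/p^n\ZZ_p$ with $(1+p)^r \equiv 1 + cp^n \bmod p^{n+1}$, and reads this off from the binomial expansion, where $p \neq 2$ is exactly what makes the linear term dominate; this is the successive-approximation argument you sketch as an alternative in your last paragraph. Your route instead produces $r = \plog(1+\pi)/\plog(1+p)$ in one stroke via the $p$-adic logarithm and exponential. What your approach buys is an explicit closed form for $r$ and a one-line uniqueness argument (inject via $\plog$ and cancel); what it costs is reliance on the standard convergence and mutual-inverseness facts for $\exp$ and $\plog$ on the disk $v_p > \frac{1}{p-1}$, plus the (routine, but worth a sentence) compatibility of $\exp(r\plog(1+p))$ with the limit-of-integer-powers definition of $(1+p)^r$. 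Your valuation bookkeeping is right: $v_p(\plog(1+p)) = 1$ and $v_p(\plog(1+\pi)) = v_p(\pi) \geq n$ for $p>2$, so $r \in p^{n-1}\ZZ_p$, and all series involved converge where you invoke them. Both proofs correctly isolate $p>2$ as the essential hypothesis; the paper's is more elementary and self-contained, yours is cleaner once the logarithm machinery is granted.
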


\begin{proof}
It suffices to show that for any $c \in \{0,1,\ldots,p-1\}$, we can find unique $r \in p^{n-1} \ZZ_p/p^n \ZZ_p$ such that $(1+p)^r \equiv 1 + cp^n \mod p^{n+1}$.  Using that $p \neq 2$, we see from the binomial expansion that only $r = cp^{n-1} \mod p^n \ZZ_p$ works.  
\end{proof}

We now realize our first goal of characterizing all continuous group homomorphisms from $\ZZ_q$ to $\ZZ_p^*$.  We will then be able to join these together to describe all continuous group homomorphisms from $\prod_{(i,p) = 1} \ZZ_q$ to $\ZZ_p^*$.  The following proposition will relate this to the case $q = p$.

\begin{Prop} \label{rho factor}
Assume $p > 2$.
Any continuous group homomorphism $\chi: \ZZ_q \rightarrow \ZZ_p^*$ can be factored as 
\[
\chi' \circ \Tr \circ c: \ZZ_q \stackrel{\cdot c}{\rightarrow} \ZZ_q \stackrel{\Tr}{\rightarrow} \ZZ_p \stackrel{\chi'}{\rightarrow} \ZZ_p^*,
\] where $c$ denotes multiplication by some element $c \in \ZZ_q$ and where $\chi': \ZZ_p \rightarrow \ZZ_p^*$.  Conversely, any such factorization yields a continuous group homomorphism.  

Moreover, we will see that $\chi'$ can be taken to be the map $\alpha \mapsto (1+p)^{\alpha}$.  With this restriction, then the corresponding element $c$ is unique.
\end{Prop}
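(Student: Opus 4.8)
The converse direction is immediate: if $c \in \ZZ_q$ and $\chi': \ZZ_p \to \ZZ_p^*$ is a continuous group homomorphism, then multiplication by $c$ is a continuous additive endomorphism of $\ZZ_q$, the trace $\Tr: \ZZ_q \to \ZZ_p$ is continuous and additive, and so the composite $\chi' \circ \Tr \circ c$ is a continuous group homomorphism $\ZZ_q \to \ZZ_p^*$. The plan is therefore to prove the forward direction together with the two ``moreover'' assertions, and the key tools will be Lemmas \ref{Trace Lem} and \ref{Lem (1+p)^r}.

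First I would locate the image of $\chi$. Since $p > 2$ we have $\ZZ_p^* \cong \mu_{p-1} \times (1 + p\ZZ_p)$, while $\ZZ_q \cong \ZZ_p^{\oplus a}$ as a topological group (as recalled in the proof of Lemma \ref{Trace Lem}) and is in particular pro-$p$. Composing $\chi$ with the projection onto $\mu_{p-1}$ gives a continuous homomorphism from a pro-$p$ group to a finite group of order prime to $p$; such a homomorphism has open kernel and hence factors through a finite $p$-group quotient, so it is trivial. Therefore $\chi(\ZZ_q) \subseteq 1 + p\ZZ_p$.

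Next I would trivialize the group $1 + p\ZZ_p$ additively via the map $\psi: \ZZ_p \to \ZZ_p^*$, $\alpha \mapsto (1+p)^{\alpha}$. This is a homomorphism landing in $1 + p\ZZ_p$, and it is continuous (if $\alpha \equiv \alpha' \bmod p^N$ then $(1+p)^{\alpha} \equiv (1+p)^{\alpha'} \bmod p^{N+1}$); Lemma \ref{Lem (1+p)^r} with $n = 1$ says precisely that $\psi$ is a bijection onto $1 + p\ZZ_p$, and since both sides are compact Hausdorff, $\psi$ is an isomorphism of topological groups onto $1 + p\ZZ_p$. (This is the step that uses $p > 2$.) Consequently $\psi^{-1} \circ \chi: \ZZ_q \to \ZZ_p$ is a continuous group homomorphism, so Lemma \ref{Trace Lem} yields a unique $c \in \ZZ_q$ with $(\psi^{-1} \circ \chi)(\alpha) = \Tr(c\alpha)$ for all $\alpha$, i.e. $\chi(\alpha) = (1+p)^{\Tr(c\alpha)}$. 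This is the desired factorization with $\chi' = \psi$, proving also that $\chi'$ may be taken to be $\alpha \mapsto (1+p)^{\alpha}$. For uniqueness of $c$ once $\chi' = \psi$ is fixed: if $\psi \circ \Tr \circ c = \psi \circ \Tr \circ c'$, then injectivity of $\psi$ gives $\Tr(c\alpha) = \Tr(c'\alpha)$ for all $\alpha \in \ZZ_q$, and the uniqueness clause of Lemma \ref{Trace Lem} forces $c = c'$.

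The only step with genuine content beyond stitching the two lemmas together is the first one, confining the image of $\chi$ to the $1$-units; I expect that to be the main (albeit mild) obstacle, while the rest is routine manipulation of continuous bijections between profinite groups.
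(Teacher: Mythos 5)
Your proof is correct and follows essentially the same route as the paper: use Lemma \ref{Lem (1+p)^r} to write $\chi(\alpha)=(1+p)^{\phi(\alpha)}$ for a continuous homomorphism $\phi$, then apply Lemma \ref{Trace Lem} to get the unique $c$. The only difference is that you spell out the step the paper leaves implicit (and treats in Remark \ref{Rmk continuity} for the analogous character on $1+t\FF_q[[t]]$), namely that the image of the pro-$p$ group $\ZZ_q$ must land in the $1$-units $1+p\ZZ_p$ so that Lemma \ref{Lem (1+p)^r} applies.
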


\begin{proof}
That any such composition yields a continuous homomorphism is clear because all maps in the composition are continuous homomorphisms (for instance, trace is a sum of continuous homomorphisms).

Now consider any continuous homomorphism $\chi: \ZZ_q \rightarrow \ZZ_p^*$.  By Lemma \ref{Lem (1+p)^r}, there exists a unique map $\phi: \ZZ_q \rightarrow \ZZ_p$ such that $\chi(\alpha) = (1+p)^{\phi(\alpha)}$ for any $\alpha \in \ZZ_q$.  By our assumptions on $\chi$, it is easy to see that $\phi$ is a (continuous group) homomorphism.  Hence we are done by Lemma \ref{Trace Lem}.
\end{proof}

\begin{Prop} \label{41712Prop}
Let $q = p^a$, where $p$ is an \emph{odd} prime.  Giving a $p$-adic character 
\[
\chi: (1 + t\FF_q[[t]])^* \rightarrow \ZZ_p^* 
\]
is equivalent to giving a sequence of elements $(c_i)_{(i,p) = 1}$, where each $c_i \in \ZZ_q$, subject to the constraint that $\lim_{i \rightarrow \infty} c_i = 0$.  More explicitly, given such a sequence $(c_i)$, the associated character $\chi$ sends an irreducible degree $d$ polynomial $f(t)$ with root $\overline{\lambda}$ to 
\[
\prod_{(i,p) = 1} (1 + p)^{\Tr_{\ZZ_{q^d}/\ZZ_p}(c_i \lambda^i)},
\]
where $\lambda$ is the Teichm\"uller lift of $\overline{\lambda}$.
\end{Prop}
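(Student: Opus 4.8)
The plan is to reduce everything to Corollary \ref{Witt corollary} and Proposition \ref{rho factor}, so that the genuine content is the analysis of continuous homomorphisms out of the infinite product $\prod_{(i,p)=1}\ZZ_q$. First I would invoke Corollary \ref{Witt corollary} to replace the $p$-adic character $\chi$ by a continuous homomorphism $\psi:\prod_{(i,p)=1}\ZZ_q\to\ZZ_p^*$, the two being matched through Definition-Theorem \ref{DefThm Lambda} and Proposition \ref{Prop idempotent}. For each $i$ with $(i,p)=1$, let $\psi_i:\ZZ_q\to\ZZ_p^*$ be the restriction of $\psi$ to the $i$-th factor, i.e.\ $\psi_i(\alpha)$ is $\psi$ evaluated at the sequence with $i$-th coordinate $\alpha$ and all other coordinates $0$. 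Because the truncations $(x_1,\ldots,x_n,0,0,\ldots)$ converge $V$-adically to $(x_i)_i$ and $\psi$ is a continuous homomorphism, one gets $\psi\big((x_i)_i\big)=\prod_i\psi_i(x_i)$; thus $\psi$ is determined by the family $(\psi_i)_i$, and conversely any family of continuous homomorphisms $\psi_i$ for which $\prod_i\psi_i(x_i)$ converges for every $(x_i)_i$ and depends continuously on $(x_i)_i$ assembles back into such a $\psi$. By Proposition \ref{rho factor} --- and this is the only place $p>2$ is used --- each $\psi_i$ has the form $\alpha\mapsto(1+p)^{\Tr_{\ZZ_q/\ZZ_p}(c_i\alpha)}$ for a unique $c_i\in\ZZ_q$. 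So giving $\chi$ amounts to giving a sequence $(c_i)_{(i,p)=1}$ in $\ZZ_q$ for which the assembled map is defined and continuous, and by the uniqueness in Proposition \ref{rho factor} the sequence is unique.

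Next I would show that ``the assembled map is defined and continuous'' is exactly the condition $\lim_{i\to\infty}c_i=0$. For one direction, since $\psi$ is continuous, each $\psi^{-1}(1+p^n\ZZ_p)$ is open, hence contains a basic $V$-adic neighborhood of $0$ that constrains only finitely many coordinates; for every unconstrained index $i$ the whole factor $\ZZ_q$ maps into $1+p^n\ZZ_p$, i.e.\ $\psi_i(\ZZ_q)\subseteq 1+p^n\ZZ_p$. Since $\Tr_{\ZZ_q/\ZZ_p}$ is surjective and carries $p^m\ZZ_q$ onto $p^m\ZZ_p$ (the extension being unramified), the image is $\psi_i(\ZZ_q)=(1+p)^{p^{v_p(c_i)}\ZZ_p}=1+p^{\,v_p(c_i)+1}\ZZ_p$ (and is trivial if $c_i=0$), so $\psi_i(\ZZ_q)\subseteq 1+p^n\ZZ_p$ is equivalent to $v_p(c_i)\geq n-1$. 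Hence for every $n$ only finitely many $i$ satisfy $v_p(c_i)<n-1$, which says precisely $c_i\to 0$. For the converse, if $c_i\to 0$ then $v_p\big(\Tr(c_ix_i)\big)\geq v_p(c_i)$ for all $x_i\in\ZZ_q$, so this tends to $\infty$ uniformly in $(x_i)_i$; the partial products $\prod_{i\leq n}(1+p)^{\Tr(c_ix_i)}$ therefore form a Cauchy sequence in $\ZZ_p^*$, and the limit $\psi\big((x_i)_i\big)$ is a homomorphism depending continuously on $(x_i)_i$ (fixing the finitely many coordinates with $v_p(c_i)<n$ determines $\psi$ modulo $1+p^n\ZZ_p$). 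This yields the asserted bijection between $p$-adic characters and sequences $(c_i)_{(i,p)=1}$ in $\ZZ_q$ with $c_i\to 0$.

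Finally I would compute $\chi(f(t))$ for an irreducible $f(t)\in 1+t\FF_q[t]$ of degree $d$ with reciprocal root $\overline\lambda$. By Example \ref{Eg commutative diagram}, under Corollary \ref{Witt corollary} the element $f(t)$ corresponds to $\big(\Tr_{\ZZ_{q^d}/\ZZ_q}(\lambda^i)\big)_{(i,p)=1}\in\prod_{(i,p)=1}\ZZ_q$, where $\lambda$ is the Teichm\"uller lift of $\overline\lambda$. Applying $\psi\big((x_i)_i\big)=\prod_i\psi_i(x_i)$ then gives
\[
\chi(f(t))=\prod_{(i,p)=1}(1+p)^{\Tr_{\ZZ_q/\ZZ_p}\left(c_i\,\Tr_{\ZZ_{q^d}/\ZZ_q}(\lambda^i)\right)},
\]
and the projection formula $\Tr_{\ZZ_{q^d}/\ZZ_q}(c_i\lambda^i)=c_i\,\Tr_{\ZZ_{q^d}/\ZZ_q}(\lambda^i)$ (valid as $c_i\in\ZZ_q$) together with transitivity of the trace, $\Tr_{\ZZ_q/\ZZ_p}\circ\Tr_{\ZZ_{q^d}/\ZZ_q}=\Tr_{\ZZ_{q^d}/\ZZ_p}$, rewrites each exponent as $\Tr_{\ZZ_{q^d}/\ZZ_p}(c_i\lambda^i)$, which is the stated formula. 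I expect the main obstacle to be the second paragraph: one must handle the $V$-adic (product) topology carefully, both when reducing an arbitrary continuous $\psi$ to the product of its coordinate restrictions and when matching continuity of the assembled map exactly with $c_i\to 0$; the rest is bookkeeping with traces and with results already established.
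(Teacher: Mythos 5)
Your proposal is correct and follows essentially the same route as the paper: reduce via Corollary \ref{Witt corollary} to a continuous homomorphism out of $\prod_{(i,p)=1}\ZZ_q$, apply Proposition \ref{rho factor} componentwise to get the $c_i$, identify continuity of the assembled map with $c_i\to 0$ by computing the image $\psi_i(\ZZ_q)=1+p^{v_p(c_i)+1}\ZZ_p$, and derive the explicit formula from Example \ref{Eg commutative diagram} plus transitivity of the trace. The only cosmetic difference is that you justify the image computation by surjectivity of the trace for unramified extensions where the paper expands $c_i$ in a basis and invokes nonsingularity of the matrix $\bigl(\Tr(e_ie_j)\bigr)$ modulo $p$; these are the same fact, and you are in fact slightly more careful than the paper about why a continuous homomorphism on the product is determined by its restrictions to the factors.
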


\begin{proof}
We can realize our $p$-adic character as a continuous homomorphism
\[
\prod_{(i,p) = 1} \ZZ_q \rightarrow \ZZ_p^*.
\]
By Proposition \ref{rho factor}, ignoring continuity temporarily, giving such a homomorphism is equivalent to giving a sequence of elements $(c_i)_{(i,p)=1}$.  The sequence of elements in $\prod_{(i,p) = 1} \ZZ_q$ given by $(1,0,0,\ldots), (0,1,0,\ldots),\ldots$ converges to $0$ in the product topology.  Hence the images of the maps $\ZZ_q \rightarrow \ZZ_p$ given by $\alpha \mapsto (1+p)^{\Tr(c_i \alpha)}$ should be converging $p$adically to $1$ as $i$ increases.  We claim that such an image is contained in $1+p^{j+1}\ZZ_p$ if and only if $c_i \in p^j \ZZ_q$.  The ``if'' direction is obvious.  We now prove the ``only if'' direction.  It suffices to prove the claim for $j = 0$.

Write $c_i = c_{i1}e_1 + \cdots + c_{ia}e_a$, where $e_1,\ldots,e_a$ is a basis for $\ZZ_q$ over $\ZZ_p$, and where $c_{ij} \in \ZZ_p$.  We claim that if one of the $c_{ij}$ is nonzero mod $p$, then $\Tr(c_i e_j)$ is nonzero mod $p$; this is enough to prove the claim.  Writing $\Tr(c_i e_j) = c_{i1}\Tr(e_1e_j) + \cdots + c_{ia}\Tr(e_ae_j)$, the claim follows as above from the fact that the matrix $\Tr(e_i e_j)$ is nonsingular mod $p$.

It remains only to show that $\chi$ has the explicit description in terms of the sequence $c_i$ given in the statement of the proposition.
Let $f(t) \in 1 + t\FF_q[t]$ denote an irreducible polynomial of degree $d$ with reciprocal root $\overline{\lambda}$, and let $\lambda$ denote the Teichm\"uller lift of $\overline{\lambda}$.   Then $\lambda \in \ZZ_{q^d}$.  For any $i$, the conjugates of $\lambda^i$ over $\ZZ_q$ are $\lambda^i, \lambda^{ip^a}, \lambda^{i p^{2a}}, \ldots, \lambda^{ip^{(d-1)a}}$.  As shown in Example \ref{Eg commutative diagram}, the polynomial $f_{\lambda}(t)$ corresponds to $(\Tr_{\ZZ_{q^d}/\ZZ_q}(\lambda^i))_{(i,p) = 1} \in \prod_{(i,p) = 1} \ZZ_q$.  Then by Proposition \ref{rho factor} there are \emph{unique} elements $c_i \in \ZZ_q$ such that 
\begin{align*}
\chi\left(f_{\lambda}(t)\right) &= \prod_{(i,p) = 1} (1+p)^{\Tr_{\ZZ_q/\ZZ_p}\left(c_i \Tr_{\ZZ_{q^d}/\ZZ_q}(\lambda^i)\right)} \\
&= \prod_{(i,p) = 1} (1 + p)^{\Tr_{\ZZ_q/\ZZ_p} \circ \Tr_{\ZZ_{q^d}/\ZZ_q}(c_i \lambda^i)} \\
&= \prod_{(i,p) = 1} (1 + p)^{\Tr_{\ZZ_{q^d}/\ZZ_p}(c_i \lambda^i)}.
\end{align*}
\end{proof}

\begin{Prop} \label{bijection Prop}
The correspondence described in Proposition \ref{41712Prop}, between characters and sequences, is a bijection.  
\end{Prop}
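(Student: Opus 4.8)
\emph{Proof proposal.} The plan is to exhibit two explicit maps and check that they are mutually inverse. By Corollary~\ref{Witt corollary} a $p$-adic character is the same as a continuous homomorphism $\chi\colon \prod_{(i,p)=1}\ZZ_q \to \ZZ_p^*$, and, as discussed after Proposition~\ref{Prop idempotent}, the topology on the source is the product topology. In one direction, send such a $\chi$ to the sequence $(c_i)$, where $c_i\in\ZZ_q$ is the unique element furnished by Proposition~\ref{rho factor} for which the restriction of $\chi$ along the inclusion $\iota_i\colon\ZZ_q\hookrightarrow\prod_{(i,p)=1}\ZZ_q$ of the $i$th factor equals $\alpha\mapsto(1+p)^{\Tr(c_i\alpha)}$; exactly the argument in the proof of Proposition~\ref{41712Prop} (nonsingularity of the matrix $\Tr(e_ie_j)$ mod $p$ together with continuity of $\chi$) shows $c_i\to 0$. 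In the other direction, send a null sequence $(c_i)$ to the map $\chi_{(c_i)}\colon(\alpha_i)\mapsto\prod_i(1+p)^{\Tr(c_i\alpha_i)}$, which is the correspondence of Proposition~\ref{41712Prop}.

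First I would verify that $\chi_{(c_i)}$ is a genuine $p$-adic character. Since $c_i\to 0$ and the $\alpha_i$ lie in the bounded set $\ZZ_q$, the exponents $\Tr(c_i\alpha_i)$ tend to $0$, so the partial products converge in the complete group $1+p\ZZ_p$; the map is a homomorphism because each finite partial product is one and homomorphisms into a Hausdorff group are preserved by pointwise limits; and it is continuous because, given $N$, if $i_0$ is chosen with $c_i\in p^N\ZZ_q$ for all $i\ge i_0$, then the cylinder neighborhood $\{(\alpha_i): \alpha_i\in p^N\ZZ_q \text{ for } i<i_0\}$ of $0$ is carried into $1+p^{N+1}\ZZ_p$.

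Next I would check that the two composites are the identity, which gives bijectivity at once. Restricting $\chi_{(c_i)}$ along $\iota_i$ kills every term but the $i$th and leaves $\alpha\mapsto(1+p)^{\Tr(c_i\alpha)}$, so by the uniqueness clause of Proposition~\ref{rho factor} the sequence extracted from $\chi_{(c_i)}$ is $(c_i)$ again. Conversely, let $\chi$ be a character with extracted sequence $(c_i)$, and fix $(\alpha_i)$. The truncations $(\alpha_1,\dots,\alpha_M,0,0,\dots)$ converge to $(\alpha_i)$ in the product topology, and each such truncation is the finite sum $\sum_{i\le M}\iota_i(\alpha_i)$; since $\chi$ is a continuous homomorphism, $\chi((\alpha_i))=\lim_M\prod_{i\le M}(1+p)^{\Tr(c_i\alpha_i)}=\prod_i(1+p)^{\Tr(c_i\alpha_i)}=\chi_{(c_i)}((\alpha_i))$. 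The explicit value on an irreducible polynomial is the one already recorded in Proposition~\ref{41712Prop} via Example~\ref{Eg commutative diagram}, so nothing further is needed there.

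I expect the only non-formal point to be this last identity: that a continuous homomorphism out of the infinite product $\prod_{(i,p)=1}\ZZ_q$ is recovered from its restrictions to the factors by the convergent product formula. This fails without continuity, and making it work relies on identifying the $V$-adic topology with the product topology and on the null-sequence condition $c_i\to 0$ to guarantee convergence of all the products involved; everything else is routine bookkeeping with Propositions~\ref{rho factor} and~\ref{41712Prop}.
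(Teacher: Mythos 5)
Your proposal is correct and follows the same route as the paper: reduce via Corollary~\ref{Witt corollary} to continuous homomorphisms out of $\prod_{(i,p)=1}\ZZ_q$, apply the uniqueness in Proposition~\ref{rho factor} factor by factor, and match the continuity of the character with the null-sequence condition as in Proposition~\ref{41712Prop}. You simply spell out the details the paper compresses into ``it suffices to show there is a bijection between continuous homomorphisms $\ZZ_q \rightarrow \ZZ_p^*$ and elements $c \in \ZZ_q$'' --- in particular the verification that the two composites are inverse and that a continuous homomorphism on the product is recovered from its restrictions to the factors via truncations.
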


\begin{proof}
We must show that there is a bijection between continuous homomorphisms $(1 + t\FF_q[[t]])^* \rightarrow \ZZ_p^*$ and sequences $(c_i)_{(i,p) = 1}$ of elements in $\ZZ_q$ converging to $0$.  It suffices to show there is a bijection between continuous homomorphisms 
\[
\prod_{(i,p) = 1} \ZZ_q \rightarrow \ZZ_p^*
\]
and convergent sequences $(c_i)_{(i,p)=1}$.  Therefore it suffices to show there is a bijection between continuous homomorphisms $\ZZ_q \rightarrow \ZZ_p^*$ and elements $c \in \ZZ_q$.  This was shown above.  
\end{proof}

\section{Meromorphic Continuation}

For the rest of the paper, we assume $p > 2$.

In this section we will use our earlier descriptions of characters and their associated $L$-functions to address the question of when the $L$-functions are $p$-adic meromorphic.  Our main strategy is to consider our $L$-functions as being associated to certain convergent power series, and then to use results from \cite{Wan96} to study meromorphic continuation.

\subsection{The result for a special character $\chi$}
To introduce our techniques, we first consider the simplest nontrivial example in detail.  Let $q = p$.  We are going to fix a $p$-adic character $\chi: (1 + t\FF_p[[t]])^* \rightarrow \ZZ_p^*$, which by Lemma \ref{41712Prop} is the same as fixing a sequence of elements $(c_i)_{(i,p) = 1}$, where each $c_i \in \ZZ_p$, and where $\lim_{i \rightarrow \infty} c_i = 0$.  For our simple introductory case, we further assume that $c_i = 0$ for $i > 1$.  Write $1 + \pi_1 := (1 + p)^{c_1}$.

Fix an irreducible degree $d$ polynomial $f(t) \in 1 + t \FF_p[t] \subseteq (1 + t \FF_p[[t]])^*$.  Let $\overline{\lambda}$ denote a reciprocal root of $f(t)$ and let $\lambda$ denote the Teichm\"uller lift of $\overline{\lambda}$ to $\ZZ_{p^d}$.  Then for the specific character $\chi$ chosen above, by Proposition \ref{41712Prop} we have 
\[
\chi: f(t) \mapsto (1 + \pi_1)^{\Tr(\lambda)} = (1 + \pi_1)^{\lambda} (1+\pi_1)^{\lambda^p} \cdots (1 + \pi_1)^{\lambda^{p^{d - 1}}}.
\]
To this character $\chi$, we can associate an $L$-function as in Definition \ref{L-function Def} and we wish to consider the meromorphic continuation of that $L$-function.  For our techniques, it is most convenient to consider this $L$-function as also being associated to a certain power series in the variable $\lambda$, which we now describe.  

For each of the above factors we can associate its binomial power series expansion  
\[
B_{\pi_1}(\lambda^{p^i}) := (1 + \pi_1)^{\lambda^{p^i}} = \sum_{j = 0}^{\infty} \binom{\lambda^{p^i}}{j} \pi_1^j.
\]
We will consider this as a power series for which $\lambda$ is the variable and $\pi_1$ is some fixed constant in $p\ZZ_p$.  
We then have
\begin{equation} \label{61012equation}
B_{\pi_1}(\lambda) = \sum a_k \lambda^k,~a_k \in \frac{\pi_1^k}{k!} \ZZ_p.
\end{equation}
Because $v_p(k!) \leq \frac{k}{p-1}$ (see for example \cite[page 79]{Kob84}) and because $v_p(\pi_1^k) \geq k$,  we have that 
\begin{equation} \label{6612equation}
v_p(a_k) \geq k \left(\frac{p-2}{p-1}\right).
\end{equation}  The coefficients are clearly in $\QQ_p$.  Because $p > 2$, the previous inequality guarantees $a_k \in p\ZZ_p$ for $k > 0$.  In terminology to be introduced now, a power series $\sum a_k \lambda^k \in \ZZ_p[[\lambda]]$ with coefficients satisfying a growth condition as in (\ref{6612equation}) is called \emph{overconvergent}.  

\begin{Def}
Let $g(\lambda)=\sum_{k=0}^{\infty} a_k \lambda^k \in \widehat{\ZZ^{nr}_p}[[\lambda]]$.  The power series $g(\lambda)$ is called \emph{convergent} if 
$$\liminf v_p(a_k) =\infty.$$ 
The power series $g(\lambda)$ is called
\emph{overconvergent} if 
$$\liminf \frac{v_p(a_k)}{k} > 0.$$
For a positive constant $0< c \leq \infty$, the power series $g(\lambda)$ is called \emph{$c\log$-convergent} if 
$$\liminf \frac{v_p(a_k)}{\log_pk} \geq c.$$
 \end{Def}

\begin{Rmk} \label{6712Rmk}
The terms convergent and overconvergent are used because a convergent power series converges on the closed unit disk, while an overconvergent power series converges on some strictly larger disk.  A crucial example for us is that the above power series $B_{\pi_1}(\lambda)$ is overconvergent. 
\end{Rmk}

Given a convergent power series $g(\lambda)$, we now describe how to associate a character and an L-function to it.  We momentarily return to the general case $q = p^a$, because we will need the general definition in the next subsection.

\begin{Def} \label{61112Def} For $q=p^a$, 
the character $\chi$ associated to a convergent power series $g(\lambda) \in \ZZ_q[[\lambda]]$ is defined to be the unique character which sends an irreducible degree $d$ polynomial over $\mathbb{F}_q$ with reciprocal root $\overline{\lambda}$ to 
$$g(\lambda)g^{\sigma}(\lambda^p)\cdots g^{\sigma^{ad-1}}(\lambda^{p^{ad-1}}) =h(\lambda)h(\lambda^q)\cdots h(\lambda^{q^{d-1}}),$$
where $\lambda$ is the Teichm\"uller lift of $\overline{\lambda}$ and $h(\lambda)$ is the power series 
$$h(\lambda) = g(\lambda) g^{\sigma}(\lambda^p) \cdots g^{\sigma^{a-1}}(\lambda^{p^{a-1}}) \in \ZZ_q[[\lambda]],$$ and where we write $\sigma$ for the Frobenius automorphism in $\text{Gal}(\QQ_{q^d}/\QQ_p)$.
\end{Def}

\begin{Def} \label{6712Def}
The L-function of a convergent  power series $h(\lambda) \in \ZZ_q[[\lambda]]$ over $\FF_q$ is defined to be 
$$L(h/\FF_q, s) := \prod_{\overline{\lambda} \in  \overline{\FF_q}^*} \frac{1}{(1-h(\lambda)h(\lambda^q) \cdots h(\lambda^{q^{d-1}}) s^d)^{1/d}} \in 1+s \widehat{\ZZ^{nr}_p}[[s]],$$
where $\lambda$ is the Teichm\"uller lifting of $\overline{\lambda}$ and $d$ denotes the degree of $\overline{\lambda}$ over $\FF_q$. The 
associated characteristic series is defined to be 
$$C(h, s) =\prod_{k=0}^{\infty} L(h/\FF_q, q^ks).$$
\end{Def}

The following results are known about the meromorphic continuation of the L-function $L(h/\FF_q, s)$ and its characteristic series; see \cite{Wan96}. 

\begin{Thm} \label{61312Thm}
Let $h(\lambda)=\sum_{k=0}^{\infty} a_k \lambda^k \in \ZZ_q[[\lambda]]$. 
If the power series $h(\lambda)$ is \emph{overconvergent}, then 
the characteristic series $C(h/\FF_q, s)$ is entire in $|s|_p <\infty$ and thus 
the L-function $L(h/\FF_q, s)$ is 
$p$-adic meromorphic in $|s|_p < \infty$. 

More generally,  if the power series $h(\lambda)$ is  \emph{$c\log$-convergent} for some constant $0<c \leq \infty$, then 
the characteristic series $C(h/\FF_q, s)$ is entire in $|s|_p < q^c$ and thus the L-function $L(h/\FF_q, s)$ is 
$p$-adic meromorphic in the open disc $|s|_p < q^c$. 
\end{Thm}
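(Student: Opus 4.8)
The plan is to reduce Theorem \ref{61312Thm} to Dwork-type results already established in \cite{Wan96}, via Dwork's trace formula. First I would recall the setup from \cite{Wan96}: given an overconvergent (or $c\log$-convergent) power series $h(\lambda) \in \ZZ_q[[\lambda]]$, Dwork's $p$-adic analytic theory constructs a nuclear (completely continuous) operator on a suitable $p$-adic Banach space of overconvergent functions, built from the Frobenius composed with multiplication by the ``splitting function'' attached to $h$, and the key identity is that $L(h/\FF_q, s)$ (or rather its characteristic series $C(h/\FF_q,s)$) equals an alternating product of Fredholm determinants of such operators. The decay rate of the matrix entries of the operator is controlled by the convergence rate of $h$: overconvergence gives a genuine geometric decay, while $c\log$-convergence gives a weaker, logarithmic decay. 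One then invokes the general principle that the Fredholm determinant of a nuclear operator whose matrix entries decay at a prescribed rate is entire, respectively entire on a disk whose radius is governed by that rate.

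The key steps, in order, would be: (1) State precisely the version of Dwork's trace formula from \cite{Wan96} expressing $C(h/\FF_q,s)$ as $\prod_j \det(I - s\,\mathrm{Frob}\mid H^j)^{(-1)^{j+1}}$ or as a ratio/product of Fredholm determinants $\det(I - s\alpha)$ for the relevant completely continuous operator $\alpha = \alpha_h$. (2) Translate the convergence hypothesis on $h$ into an estimate on the Newton polygon / the valuations of the matrix entries $a_{mn}$ of $\alpha_h$ in a monomial basis: for $h$ overconvergent with $\liminf v_p(a_k)/k = \delta > 0$, one gets $v_p(a_{mn}) \geq \delta' \max(m,n)$ for some $\delta' > 0$; for $h$ that is $c\log$-convergent one gets the weaker bound $v_p(a_{mn}) \gtrsim c \log_p(\max(m,n))$, up to bounded error. (3) Deduce the growth of the coefficients of the Fredholm determinant $\det(I - s\alpha_h) = \sum_N C_N s^N$: each $C_N$ is an $N \times N$ sum of products of $N$ matrix entries, so $v_p(C_N) \geq$ (roughly) $\sum_{i=1}^{N} (\text{decay at index } i)$. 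In the overconvergent case this sum grows like $\delta' N^2/2$, forcing $\det(I-s\alpha_h)$ to be entire; in the $c\log$ case it grows like $c\sum_{i \leq N}\log_p i \sim c(N\log_p N - N/\ln p)$, and a short computation with Newton polygons shows the resulting series converges precisely on $|s|_p < q^c$. (4) Assemble the finitely many Fredholm determinants appearing in the trace formula: entire $\times$ entire and entire $/$ (nonzero) entire stays entire, so $C(h/\FF_q,s)$ is entire on $|s|_p < \infty$ in the overconvergent case; similarly the $q^c$-disk statement is preserved under the finite product/quotient, giving meromorphy of $L(h/\FF_q,s)$ on $|s|_p < q^c$. (5) Conclude meromorphy of $L$ from entireness of $C$ via the relation $L(h/\FF_q,s) = C(h,s)/C(h,qs)$, exactly as in the introduction.

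The main obstacle I expect is step (3): carefully relating the convergence exponent of the one-variable power series $h(\lambda)$ to the growth rate of the coefficients $C_N$ of the Fredholm determinant, keeping track of the constants so that one recovers exactly $q^c$ (and not merely $q^{c'}$ for some smaller $c'$) in the $c\log$-convergent case. This is precisely where the ``optimality'' of the bound lives, and it requires a somewhat delicate Newton-polygon estimate on the splitting-function/operator matrix rather than a soft argument. Since the statement asserts this is \emph{known} and cites \cite{Wan96}, in the actual write-up I would quote the relevant proposition from \cite{Wan96} verbatim rather than reprove it; the only real work left in the paper is to match our normalizations (the definition of $L(h/\FF_q,s)$ and $C(h,s)$ in Definitions \ref{6712Def}) to those of \cite{Wan96}, which is routine.

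Finally, I would remark that the hypothesis $h \in \ZZ_q[[\lambda]]$ (integral coefficients) together with the convergence condition is exactly what is needed: integrality ensures the operator $\alpha_h$ has entries in $\ZZ_q$ (so no negative contributions to valuations sneak in from the constant term of $h$), and the growth condition supplies the decay; dropping integrality would at worst shift the radius by a bounded factor, but we do not need that generality here.
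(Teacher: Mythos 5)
Your proposal is correct and matches the paper's treatment: the paper gives no independent proof of this theorem, stating only that these results are known and citing \cite{Wan96}, which is exactly the Dwork trace formula / Fredholm determinant machinery you outline. Your sketch of how the argument in \cite{Wan96} proceeds (nuclear operator from the splitting function, entry decay controlled by the convergence rate of $h$, Newton polygon estimate on the Fredholm coefficients, and the relation $L(h/\FF_q,s)=C(h,s)/C(h,qs)$) is a faithful account of that reference, and your stated plan to quote it rather than reprove it is precisely what the authors do.
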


Before considering the meromorphic continuation of the $L$-function associated to a general $p$-adic character, we return to the simple character $\chi$ fixed at the beginning of this subsection.  Recall that we are temporarily assuming $q=p$.  

The $L$-functions we are considering are related as follows:
\begin{align*}
L(\chi,s) &=  \prod_{{f(t) \text{ irred poly,} \atop f(t) \equiv 1 \bmod t}} \frac{1}{1 - \chi(f(t)) s^{\text{deg }f}} \\
&= \prod_{{f(t) \text{ irred poly,} \atop f(t) \equiv 1 \bmod t}} \frac{1}{1 - (1 + \pi_1)^{\lambda + \lambda^p + \cdots + \lambda^{p^{d-1}}} s^{d}}
\intertext{(where we write $\lambda$ for the Teichm\"uller lift of a reciprocal root of $f$, and write $d$ for the degree of $f$ over $\FF_p$)}
&= \prod_{\overline{\lambda} \in  \overline{\FF_p}^*} \frac{1}{\left(1-(1 + \pi_1)^{\lambda + \lambda^p + \cdots + \lambda^{p^{d-1}}}) s^d\right)^{1/d}}\\
&= \prod_{\overline{\lambda} \in  \overline{\FF_p}^*} \frac{1}{\left(1-B_{\pi_1}(\lambda)B_{\pi_1}(\lambda^p) \cdots B_{\pi_1}(\lambda^{p^{d-1}}) s^d\right)^{1/d}} \\
&= L(B_{\pi_1}/\FF_p,s).
\end{align*} 
Combining this equality with the preceding results, to prove that $L(\chi,s)$ is $p$-adic meromorphic for our easy introductory example, we need only demonstrate overconvergence or $\infty \log$-convergence of the power series $B_{\pi_1}(\lambda)$.  The overconvergence of this power series was already mentioned in Remark \ref{6712Rmk}.  This completes our treatment of the simple character $\chi$ we fixed at the beginning of this subsection.

\subsection{The general case}
Continue to assume $p$ is odd, but we now allow $q = p^a$ with any $a \geq 1$.
For two reasons, working with general characters $\chi: (1 + t\FF_q[[t]])^* \rightarrow \ZZ_p^*$ is more difficult than the situation in the previous subsection.  First of all, in the associated sequence $(c_i)_{(i,p) = 1}$, there will typically be infinitely many nonzero terms.  Secondly, if $q = p^a$ with $a > 1$, then the elements $c_i$ are in $\ZZ_q$, not $\ZZ_p$.  Thus the absolute traces of the elements $c_i \lambda^i$ are more complicated.  

To follow the same approach as above, we want to find a power series in $\lambda$ from which can recover the character values found in Proposition \ref{41712Prop}:
\[
\chi\left(f_{\lambda}(t)\right) = \prod_{(i,p) = 1} (1 + p)^{\Tr_{\ZZ_{q^d}/\ZZ_p}(c_i \lambda^i)}.
\]
(Here and throughout we let $f_{\lambda}(t)$ denote a general irreducible polynomial in $1 + t\FF_q[t]$.  We call its degree $d$ and we write $\lambda$ for the Teichm\"uller lift of one of its reciprocal roots. Hence $\lambda \in \ZZ_{q^d}$.)
If we again let $\sigma$ denote Frobenius in $\text{Gal}(\QQ_{q^d}/\QQ_p)$, then we can rewrite
\begin{align*}
\chi\left(f_{\lambda}(t)\right) &= \prod_{j = 0}^{ad-1}\prod_{(i,p) = 1} (1 + p)^{\sigma^{j}(c_i \lambda^i)}\\
 &= \prod_{j = 0}^{ad-1}\prod_{(i,p) = 1} (1 + p)^{\sigma^{j}(c_i) \lambda^{ip^j}}.
\end{align*}
Note now that because $c_i \in \ZZ_q$, we have  $\sigma^{j}(c_i) = c_i$ whenever $j$ is a multiple of $a$.  Let $\pi_{ij} \in \ZZ_q$ denote the unique element for which $1 + \pi_{ij} = (1 + p)^{\sigma^j(c_i)}$.  We then have
\[
\chi\left(f_{\lambda}(t)\right) = \prod_{j = 0}^{a-1} \prod_{(i,p) = 1} (1 + \pi_{ij})^{\lambda^{ip^j}}(1 + \pi_{ij})^{\lambda^{ip^jq}} \cdots (1 + \pi_{ij})^{\lambda^{ip^jq^{d-1}}}.
\]
Finally, using the same notation as in the last subsection, we have
\[
\chi\left(f_{\lambda}(t)\right) = \prod_{j = 0}^{a-1} \prod_{(i,p) = 1} B_{\pi_{ij}}(\lambda^{ip^j})B_{\pi_{ij}}(\lambda^{ip^jq}) \cdots B_{\pi_{ij}}(\lambda^{ip^jq^{d-1}}).
\]

Abbreviating the sequence of elements $\pi_{ij}$ by $\pi$, we define 
\begin{equation} \label{61712 equation}
\OO_{\pi}(\lambda) := \prod_{j = 0}^{a-1} \prod_{(i,p) = 1} B_{\pi_{ij}}(\lambda^{ip^j}).
\end{equation}
Note that this series is independent of $d$; in other words, it does not depend on the degree of $\lambda$ over $\ZZ_q$.
We now have immediately that 
$$L(\chi, s) = L( \OO_{\pi}(\lambda)/\FF_q, s), \ C(\chi, s) = C( \OO_{\pi}(\lambda)/\FF_q, s).$$
This enables us to study $L(\chi, s)$ via the series $\OO_{\pi}(\lambda).$

\begin{Thm} \label{41712Thm}
Fix a prime $p > 2$ and a prime power $q = p^a$.
Let 
\[
\chi: (1+t\FF_q[[t]])^* \rightarrow \ZZ_p^*
\]
denote a continuous character, and let $(c_i)_{(i,p) = 1}$ denote the sequence of elements in $\ZZ_q$ defined in Proposition \ref{41712Prop}.  If the series $\sum c_i x^i$ is overconvergent, then 
the characteristic series $C(\chi, s)$ is entire in $|s|_p <\infty$ and 
the associated $L$-function $L(\chi, s)$ is $p$-adic meromorphic in $|s|_p < \infty$.
\end{Thm}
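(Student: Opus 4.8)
The plan is to push everything through the auxiliary power series $\OO_{\pi}(\lambda)$ of (\ref{61712 equation}) and then quote Theorem \ref{61312Thm}. Recall that just before the statement we recorded $L(\chi,s) = L(\OO_{\pi}(\lambda)/\FF_q,s)$ and $C(\chi,s) = C(\OO_{\pi}(\lambda)/\FF_q,s)$, where $\OO_{\pi}(\lambda) = \prod_{j=0}^{a-1}\prod_{(i,p)=1} B_{\pi_{ij}}(\lambda^{ip^j})$ and $1 + \pi_{ij} = (1+p)^{\sigma^j(c_i)}$. So it suffices to show that, under the hypothesis that $\sum c_i x^i$ is overconvergent, the series $\OO_{\pi}(\lambda)$ lies in $\ZZ_q[[\lambda]]$ and is overconvergent; Theorem \ref{61312Thm} then immediately gives that $C(\chi,s)$ is entire and that $L(\chi,s)$ is $p$-adic meromorphic on $|s|_p < \infty$.

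First I would record the elementary valuation estimates. Since $1 + \pi_{ij} = (1+p)^{\sigma^j(c_i)} = \exp\big(\sigma^j(c_i)\plog(1+p)\big)$ with $v_p(\plog(1+p)) = 1$ (here we use $p > 2$) and $v_p(\sigma^j(c_i)) = v_p(c_i)$, one gets $v_p(\pi_{ij}) = v_p(c_i) + 1 \ge 1 > \tfrac{1}{p-1}$. Exactly as in the derivation of (\ref{6612equation}) --- expand $B_{\pi_{ij}}(\lambda) = \sum_k \binom{\lambda}{k}\pi_{ij}^k$, use that $\lambda(\lambda-1)\cdots(\lambda-k+1) \in \ZZ[\lambda]$ and $v_p(k!) \le \tfrac{k}{p-1}$ --- the coefficient of $\lambda^m$ in $B_{\pi_{ij}}(\lambda)$ has valuation at least $m\big(v_p(c_i) + \tfrac{p-2}{p-1}\big)$. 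In particular this is $\ge 0$, so $\OO_{\pi}(\lambda) \in \ZZ_q[[\lambda]]$; and since each factor $B_{\pi_{ij}}(\lambda^{ip^j})$ equals $1$ plus terms of degree $\ge ip^j$, only finitely many factors influence any fixed coefficient of $\OO_{\pi}(\lambda)$, so the product is a well-defined power series.

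The heart of the argument is then an estimate for $v_p(b_n)$, where $\OO_{\pi}(\lambda) = \sum_n b_n\lambda^n$. A monomial $\lambda^n$ arises by choosing, for each pair $(i,j)$, a term $\lambda^{ip^j m_{ij}}$ from $B_{\pi_{ij}}(\lambda^{ip^j})$ with $\sum_{i,j} ip^j m_{ij} = n$, and such a choice contributes a coefficient of valuation at least $\sum_{i,j} m_{ij}\big(v_p(c_i) + \tfrac{p-2}{p-1}\big)$; hence $v_p(b_n)$ is bounded below by the minimum of this quantity over all such decompositions. Overconvergence of $\sum c_i x^i$ provides $\delta > 0$ and $i_0$ with $v_p(c_i) \ge \delta i$ for $i \ge i_0$. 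Splitting $n = n_1 + n_2$ according to $i \ge i_0$ versus $i < i_0$, the part of the cost from $i \ge i_0$ is at least $\delta \sum_{i\ge i_0,j} i\,m_{ij} \ge \tfrac{\delta}{p^{a-1}} n_1$, and the part from $i < i_0$ is at least $\tfrac{p-2}{p-1}\sum_{i<i_0,j} m_{ij} \ge \tfrac{p-2}{(p-1)\,i_0 p^{a-1}} n_2$ (using $ip^j \le i_0 p^{a-1}$ in that range). Therefore $v_p(b_n) \ge \epsilon n$ with $\epsilon := \min\!\big(\tfrac{\delta}{p^{a-1}},\, \tfrac{p-2}{(p-1)\,i_0 p^{a-1}}\big) > 0$ independent of $n$, so $\OO_{\pi}(\lambda)$ is overconvergent and we are done.

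I expect this last combinatorial estimate to be the only genuine obstacle: the point is precisely that the linear growth $v_p(c_i) \gtrsim \delta i$ forced by overconvergence must dominate the two sources of loss, namely the denominators $k!$ in the binomial expansions (a factor $\tfrac{1}{p-1}$ per unit of degree) and the spreading-out of exponents caused by the substitutions $\lambda \mapsto \lambda^{ip^j}$ (a factor at worst $p^{a-1}$). Everything else --- the identities $L(\chi,s) = L(\OO_{\pi}/\FF_q,s)$ and $C(\chi,s) = C(\OO_{\pi}/\FF_q,s)$, the valuation of $\plog(1+p)$, the binomial coefficient bound, and the final appeal to Theorem \ref{61312Thm} --- is either already established in the text above or entirely routine.
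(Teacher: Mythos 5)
Your proof is correct and follows essentially the same route as the paper: reduce to showing $\OO_{\pi}(\lambda)$ is overconvergent via the identities $L(\chi,s)=L(\OO_{\pi}/\FF_q,s)$ and $C(\chi,s)=C(\OO_{\pi}/\FF_q,s)$, bound the binomial denominators by $v_p(k!)\le k/(p-1)$, and absorb the loss from the substitutions $\lambda\mapsto\lambda^{ip^j}$ into a factor $p^{a-1}(p-1)$, then quote Theorem \ref{61312Thm}. Your explicit splitting of the indices at $i_0$ is in fact slightly more careful than the paper's version, which asserts one may shrink the constant so that $v_p(c_i)\ge Ci$ for \emph{all} $i$ --- a step that fails verbatim when some $c_i$ with small index is a unit --- and the finite-exceptional-set estimate you give (using only $v_p(\pi_{ij})\ge 1$ for the finitely many small $i$) is precisely the needed repair.
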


\begin{proof} By Theorem \ref{61312Thm}, it suffices to prove that  $\OO_{\pi}(\lambda)$ is overconvergent. 
We assume there exists $C>0$ such that $v_p(c_i) \geq Ci$.  This is certainly true for suitably large $i$, and after shrinking $C$ we can assume it for all $i$.  

Define $R_C := \{ \sum a_k \lambda^k \mid v_p(a_k) \geq Ck \}.$
We want to show that the series $\OO_{\pi}(\lambda)$ is overconvergent.  
Since $R_C$ is a ring, 
it suffices to show that each factor $B_{\pi_{ij}}(\lambda^{ip^j}) \in R_{\frac{C}{p^{a-1}(p-1)}}$.  Write 
\[
B_{\pi_{ij}}(\lambda^{ip^j}) = \sum_{k = 0}^{\infty} a_{ijk} \lambda^{kip^j}.
\]
We know as in Equation (\ref{6612equation}) that 
\begin{align*}
v_p(a_{ijk}) &\geq v_p(\pi_{ij}^{k}) - v_p(k!) \\
&= k v_p(\pi_{ij}) - v_p(k!).
\end{align*} 
Recalling the definition of $\pi_{ij}$, and using the fact that valuation is not changed by automorphisms, $v_p(c_i) = v_p(\sigma^j(c_i))$, we have 
\begin{align*}
v_p(a_{ijk}) &\geq k (Ci+1) - \frac{k}{p-1} \\
&\geq \frac{k(Ci + (p-2))}{p-1} \\
&\geq \frac{kip^jCi}{ip^j(p-1)} \\
&\geq \frac{C}{p^{a-1}(p-1)} kip^j,
\end{align*}
as required.
\end{proof}

\begin{Thm} \label{log Thm}
Keep notation as in Theorem \ref{41712Thm}.  If the series $\sum c_i x^i$ is $C\log$-convergent, then 
the characteristic series $C(\chi, s)$ is entire in the disk $|s|_p <q^C$ and the $L$-function $L(\chi, s)$ is $p$-adic meromorphic in the disc $|s|_p < q^C$.
\end{Thm}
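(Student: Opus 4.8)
The plan is to proceed exactly as in the proof of Theorem~\ref{41712Thm}: by Theorem~\ref{61312Thm} it suffices to show that the power series
\[
\OO_{\pi}(\lambda)=\prod_{j=0}^{a-1}\prod_{(i,p)=1}B_{\pi_{ij}}(\lambda^{ip^{j}})
\]
is $C\log$-convergent, since then $C(\chi,s)=C(\OO_{\pi}(\lambda)/\FF_q,s)$ is entire in $|s|_p<q^{C}$ and $L(\chi,s)=L(\OO_{\pi}(\lambda)/\FF_q,s)$ is $p$-adic meromorphic in $|s|_p<q^{C}$. Recall that the hypothesis ``$\sum c_i x^{i}$ is $C\log$-convergent'' means: for every $\epsilon>0$ there is an $i_{0}$ with $v_p(c_i)\geq (C-\epsilon)\log_p i$ for all $i\geq i_{0}$ (and $v_p(c_i)\geq 0$ always); when $C=\infty$ one simply runs the argument below with $C-\epsilon$ replaced by an arbitrary finite constant.

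Write $B_{\pi_{ij}}(\lambda^{ip^{j}})=\sum_{k\geq 0}a_{ijk}\lambda^{kip^{j}}$. From $1+\pi_{ij}=(1+p)^{\sigma^{j}(c_i)}$ and Lemma~\ref{Lem (1+p)^r} we get $v_p(\pi_{ij})=v_p(c_i)+1$, and then, as in the derivation of~(\ref{6612equation}) and in the proof of Theorem~\ref{41712Thm} (using $v_p(k!)\leq k/(p-1)$),
\[
v_p(a_{ijk})\ \geq\ k\,v_p(\pi_{ij})-v_p(k!)\ \geq\ k\,v_p(c_i)+k\cdot\frac{p-2}{p-1}.
\]
Write $\OO_{\pi}(\lambda)=\sum_{N}b_{N}\lambda^{N}$. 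Each monomial of $\OO_{\pi}$ contributing to $\lambda^{N}$ is of the form $\prod_{(i,j)\in T}a_{ij,k_{ij}}\lambda^{k_{ij}ip^{j}}$ for a finite set $T$ of pairs with all $k_{ij}\geq 1$ and $\sum_{(i,j)\in T}k_{ij}ip^{j}=N$, and by the ultrametric inequality $v_p(b_{N})$ is at least the minimum over such monomials of $\sum_{(i,j)\in T}\big(k_{ij}v_p(c_i)+k_{ij}\tfrac{p-2}{p-1}\big)$. Fix $\epsilon>0$ and the corresponding $i_{0}$.

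Given $T$, split it as $T_{\mathrm{sm}}\sqcup T_{\mathrm{lg}}$ according to whether $i<i_{0}$ or $i\geq i_{0}$, and set $N_{\mathrm{sm}}=\sum_{T_{\mathrm{sm}}}k_{ij}ip^{j}$ and $N_{\mathrm{lg}}=\sum_{T_{\mathrm{lg}}}k_{ij}ip^{j}$, so $N=N_{\mathrm{sm}}+N_{\mathrm{lg}}$. Since $0\leq j\leq a-1$ we have $p^{j}<q$, so $N_{\mathrm{sm}}<i_{0}q\sum_{T_{\mathrm{sm}}}k_{ij}$; hence the small-index factors alone contribute at least $\tfrac{p-2}{p-1}\sum_{T_{\mathrm{sm}}}k_{ij}>\tfrac{p-2}{p-1}\cdot\tfrac{N_{\mathrm{sm}}}{i_{0}q}$, which is linear in $N_{\mathrm{sm}}$ (the coefficient is positive since $p>2$). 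If $T_{\mathrm{lg}}\neq\emptyset$, pick $(i^{*},j^{*})\in T_{\mathrm{lg}}$ maximizing $k_{ij}ip^{j}$; then $N_{\mathrm{lg}}\leq |T_{\mathrm{lg}}|\,k_{i^{*}j^{*}}i^{*}p^{j^{*}}$, so
\[
\log_p N_{\mathrm{lg}}\ \leq\ \log_p|T_{\mathrm{lg}}|+\log_p k_{i^{*}j^{*}}+\log_p i^{*}+j^{*},
\]
while the large-index factors contribute at least
\[
k_{i^{*}j^{*}}(C-\epsilon)\log_p i^{*}+\frac{p-2}{p-1}\sum_{T_{\mathrm{lg}}}k_{ij}\ \geq\ (C-\epsilon)\log_p i^{*}+\frac{p-2}{2(p-1)}\big(|T_{\mathrm{lg}}|+k_{i^{*}j^{*}}\big),
\]
using $v_p(c_{i^{*}})\geq (C-\epsilon)\log_p i^{*}$ (valid as $i^{*}\geq i_{0}$), $k_{i^{*}j^{*}}\geq 1$, and $\sum_{T_{\mathrm{lg}}}k_{ij}\geq\tfrac12(|T_{\mathrm{lg}}|+k_{i^{*}j^{*}})$. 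Since $\tfrac{p-2}{2(p-1)}m\geq (C-\epsilon)\log_p m$ once $m$ exceeds a bound depending only on $p$ and $C-\epsilon$, and since $j^{*}\leq a-1$, this contribution is at least $(C-\epsilon)\log_p N_{\mathrm{lg}}-E$ for a constant $E=E(p,a,C,\epsilon)$ independent of $N$ and of the monomial.

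Now combine the two cases via $\log_p N\leq \log_p 2+\log_p\max(N_{\mathrm{sm}},N_{\mathrm{lg}})$. When $N_{\mathrm{sm}}\geq N_{\mathrm{lg}}$ (in particular whenever $T_{\mathrm{lg}}=\emptyset$) we have $N_{\mathrm{sm}}\geq N/2$, so the linear bound exceeds $(C-\epsilon)\log_p N$ once $N$ is large; when $N_{\mathrm{lg}}>N_{\mathrm{sm}}$ the large-index bound gives at least $(C-\epsilon)\log_p N-E'$ for a constant $E'$. Hence there is $N_{1}$ with $v_p(b_{N})\geq (C-\epsilon)\log_p N-E'$ for all $N\geq N_{1}$, so $\liminf_{N}v_p(b_{N})/\log_p N\geq C-\epsilon$; as $\epsilon>0$ was arbitrary, $\OO_{\pi}(\lambda)$ is $C\log$-convergent, completing the proof. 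I expect this estimate to be the crux: in the overconvergent case of Theorem~\ref{41712Thm} it was enough to note that $R_{C}$ is a ring and that each factor lies in a single $R_{C'}$, but the set of $C\log$-convergent series is \emph{not} a ring (multiplying two costs a bounded additive term in the $\log_p$-bound), so one cannot simply multiply the factors and must instead control how contributions from the infinitely many factors pile up at a given exponent $N$. An alternative packaging worth considering is to rewrite $\OO_{\pi}(\lambda)=(1+p)^{H(\lambda)}$ with $H(\lambda)=\sum_{j=0}^{a-1}G^{\sigma^{j}}(\lambda^{p^{j}})$ and $G(\lambda)=\sum_{(i,p)=1}c_i\lambda^{i}$, to observe from the explicit coefficients of $H$ that it is $C\log$-convergent and supported on exponents $N$ with $v_p(N)\leq a-1$, and then to prove that $Y\mapsto\exp(\plog(1+p)\cdot Y)$ carries such series to $C\log$-convergent series; the estimate needed there is essentially the one above.
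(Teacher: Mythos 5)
Your proof is correct, and it follows the paper's overall reduction: by Theorem \ref{61312Thm} everything comes down to showing that $\OO_{\pi}(\lambda)$ is $C\log$-convergent, using the same per-factor coefficient bound $v_p(a_{ijk}) \geq k\,v_p(c_i) + k\tfrac{p-2}{p-1}$ for $B_{\pi_{ij}}(\lambda^{ip^j})$. Where you genuinely diverge is in how the infinite product is controlled. You estimate the coefficient $b_N$ of $\OO_\pi$ directly, taking the minimum over all monomials contributing to $\lambda^N$ and splitting each monomial's index set into its small-$i$ and large-$i$ parts; the small part is handled by the linear term $\tfrac{p-2}{p-1}\sum k_{ij}$ and the large part by isolating the pair maximizing $k_{ij}ip^j$. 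The paper instead defuses exactly the obstruction you flag (that $C\log$-convergence is only stable under finite products up to a bounded additive loss, which accumulates over infinitely many factors) by a normalization trick: it works with
\[
S_C = \Bigl\{\, 1 + \sum_{k\ge 1} a_k\lambda^k \ \Big|\ v_p(a_k) \geq C(\log_p k + 1) \,\Bigr\},
\]
and the extra ``$+1$'' makes $S_C$ stable under arbitrary convergent products with \emph{no} loss, since $\log_p\bigl(\sum k_n\bigr) + 1 \leq \sum\bigl(\log_p k_n + 1\bigr)$ when each $k_n\ge 1$. It then checks that each factor with $i$ large lies in $S_{C-2\epsilon}$, that the finitely many remaining factors form an overconvergent product (by the argument of Theorem \ref{41712Thm}), and that the product of a $C\log$-convergent series with an overconvergent one is $C\log$-convergent. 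The paper's packaging is cleaner and reusable; your direct estimate is messier but entirely valid, and it has the virtue of making explicit the multiplicative-closure facts that the paper's proof compresses into ``an easy computation shows that $S_C$ is a ring.'' Your closing remark about rewriting $\OO_\pi(\lambda)$ as $(1+p)^{H(\lambda)}$ is essentially the mechanism of Section 5 (Lemma \ref{61212Lem} run in reverse), so that alternative packaging is also consistent with the paper.
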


\begin{proof}
It again suffices to show that $\OO_{\pi}(\lambda)$ is $C\log$-convergent.
Fix any $\epsilon$ such that $\frac{C}{2} > \epsilon > 0$.  By our assumption on $\sum c_i x^i$, there exists a constant $M(\epsilon)$ such that for all $i > M(\epsilon)$, we have $v_p(c_i) \geq (C-\epsilon)(\log_p(i) + 1)$.  Consider $S_C \subseteq 1 + \lambda \ZZ_q[[\lambda]]$ defined by
\[ 
S_C = \left\{ 1+ \sum_{k=1}^{\infty} a_k \lambda^k \mid a_k \in \ZZ_q,  \ v_p(a_k)\geq C (\log_p(k) + 1) \right\}.
\]  An easy computation shows that $S_C$ is a ring.  Our strategy is to show that if $i$ is suitably large, then $B_{\pi_{ij}}(\lambda^{ip^j}) \in S_{C - 2\epsilon}$ and that for all other $i$, then $B_{\pi_{ij}}(\lambda^{ip^j})$ is overconvergent.

We proceed as in the proof of Theorem \ref{41712Thm}. Write 
\[
B_{\pi_{ij}}(\lambda^{ip^j}) = 1 + \sum_{k = 1}^{\infty} a_{ijk} \lambda^{kip^j}.
\]
We have again
\begin{align*}
v_p(a_{ijk}) &\geq v_p(\pi_{ij}^{k}) - v_p(k!) \\
&= k v_p(\pi_{ij}) - v_p(k!).
\end{align*}
Now we have our first departure from the proof of Theorem \ref{41712Thm}, because we now have a weaker growth condition on the sequence $(c_i)$, and hence a weaker growth condition on the terms $\pi_{ij}$, which have valuation $v_p(c_i) + 1$.  In our current case $i > M(\epsilon)$, we have
\begin{align*}
v_p(a_{ijk}) &\geq k (C-\epsilon)(\log_p(i) + 1) + k - \frac{k}{p-1} \\ 
&\geq  (C - \epsilon)(\log_p(i) + \log_p(k))\\
&= (C - \epsilon) (\log_p(i) + \log_p(k) + \log_p(p^j) + 1) - (C-\epsilon)(j+1)\\
& = (C - \epsilon) \log_p(kip^j) - (C-\epsilon) (j+1)\\
&\geq \left(C - 2\epsilon\right) \log_p(kip^j) + \epsilon \log_p(i) - C(j+1).
\end{align*}
Because $C$ and $\epsilon$ are fixed, and because $j$ is bounded, we have that for all but finitely many $i$, 
\[
v_p(a_{ijk}) \geq \left(C - 2\epsilon\right)(\log_p(kip^j)+1).
\]
Hence for almost all values of $i$, we have $B_{\pi_{ij}}(\lambda^{ip^j}) \in S_{C - 2\epsilon}$.

Now consider the finitely many remaining values of $i$.  We can find $D>0$ such that $v_p(\pi_{ij}) \geq D(i+1)$ for all remaining $i$ and all $j$.  We can then use the proof of Theorem \ref{41712Thm} to see that all the corresponding series $B_{\pi_{ij}}$ are overconvergent.  

It is clear that the product of a $C\log$-convergent power series and an overconvergent series is again $C\log$-convergent.  We thus have that $\OO_{\pi}(\lambda)$ is $(C-2\epsilon)\log$-convergent for every $\epsilon$ in the range $\frac{C}{2} > \epsilon > 0$.  It follows that $\OO_{\pi}(\lambda)$ is $C\log$-convergent.
\end{proof}

\section{Converting from a power series to a sequence}

We have a bijection between characters $\chi$ and sequences $(c_i)$; see Proposition \ref{bijection Prop}.  On the other hand, many power series can induce the same character; for example, any power series of the form $g(\lambda)/g^{\sigma}(\lambda^p)$ induces the trivial character.  Thus the function 
\[
(c_i) \mapsto \prod_{(i,p) = 1} (1 + p)^{c_i \lambda^i}
\] cannot be a bijection between sequences and power series.  Write $F$ to denote this function.  Write $g(\lambda)$ for the image of $(c_i)$ under $F$.  Then, in the notation of Equation (\ref{61712 equation}), we have $\OO_{\pi}(\lambda) = g(\lambda)g^{\sigma}(\lambda^p) \cdots g^{\sigma^{a-1}}(\lambda)^{p^{a-1}}$.  The point of the function $F$ is that the character associated to $(c_i)$ as in Proposition \ref{41712Prop} is the same as the character associated to $g(\lambda)$ as in Definition \ref{61112Def}.

Our goal in this section is to describe a one-sided inverse $G$ from convergent power series in $\ZZ_q\langle \lambda \rangle$ to sequences of elements in $\ZZ_q$,  written $G: g(\lambda) \mapsto (d_i)$, with the following two properties:
\begin{enumerate}
\item The composition $G \circ F$ is the identity.
\item If $g(\lambda)$ is $C\log$-convergent, then so is $G(g(\lambda))$.
\end{enumerate}

How should we define $G$?  Given a power series $g(\lambda)$, we would like to find a sequence $(d_i)_{(i,p) = 1}$ such that 
\[
g(\lambda) = (1 + p)^{\sum d_i \lambda^i}.
\]
Let $\plog$ denote the $p$-adic logarithm; see for instance \cite[Chapter IV]{Kob84}.  (Note the difference in notation between this and the base $p$ logarithm $\log_p$ used to define $\log$-convergence in the last section.)  Applying $\plog$ to both sides of the above equation, we find 
\[
\frac{\plog(g(\lambda))}{\plog(1+p)} = \sum d_i \lambda^i.
\]
The problem is that the left side probably includes nonzero terms $d_i \lambda^i$ even when $(i,p) \neq 1$.  The following definition helps us remove unwanted $p$-powers.

\begin{Def} \label{psi Def}
Define a map $\psi_p: \ZZ_q\langle \lambda \rangle \rightarrow \ZZ_q \langle\lambda \rangle$ by sending 
\[
\psi_p: \sum_{k = 0}^{\infty} a_k \lambda^k \mapsto \sum_{(i,p) = 1} b_i \lambda^i, \text{ where } b_i = \sum_{j = 0}^{\infty} \sigma^{-j}(a_{ip^j}).
\]
\end{Def}

\begin{Rmk}
Note that our definition only makes sense for convergent power series.  Also note that $\psi_p$ is additive but not linear.
\end{Rmk}

\begin{Lem}
If $g(\lambda)$ is overconvergent (respectively, $C\log$-convergent), then $\psi_p(g(\lambda))$ is overconvergent (respectively, $C\log$-convergent).
\end{Lem}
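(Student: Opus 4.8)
The plan is to reduce the whole statement to two elementary observations about the formula defining $\psi_p$. First, each $\sigma^{-j}$ is a ring automorphism of $\ZZ_q$ and hence an isometry for the $p$-adic valuation, so $v_p(\sigma^{-j}(a_{ip^j})) = v_p(a_{ip^j})$. Second, the ultrametric inequality gives, for every $i$ with $(i,p) = 1$,
\[
v_p(b_i) \;\geq\; \min_{j \geq 0} v_p\bigl(\sigma^{-j}(a_{ip^j})\bigr) \;=\; \min_{j \geq 0} v_p(a_{ip^j}) \;\geq\; \inf_{k \geq i} v_p(a_k).
\]
(The sum defining $b_i$ converges because $g(\lambda)$ is convergent, so $\psi_p(g(\lambda))$ is again a convergent series in $\ZZ_q\langle \lambda \rangle$ and it makes sense to speak of its over- or $C\log$-convergence.) The guiding slogan is that every index $ip^j$ appearing in $b_i$ satisfies $ip^j \geq i$, so any estimate bounding $v_p(a_k)$ from below in terms of $k$ transfers directly to an estimate on $v_p(b_i)$ in terms of $i$.

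For the overconvergent case, overconvergence of $g(\lambda)$ supplies constants $C_0 > 0$ and $N_0$ with $v_p(a_k) \geq C_0 k$ for all $k > N_0$. If $i > N_0$ and $(i,p) = 1$, then $ip^j \geq i > N_0$ for every $j \geq 0$, whence $v_p(a_{ip^j}) \geq C_0 i p^j \geq C_0 i$, and the displayed inequality gives $v_p(b_i) \geq C_0 i$. Thus $\liminf_i v_p(b_i)/i \geq C_0 > 0$, so $\psi_p(g(\lambda))$ is overconvergent.

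For the $C\log$-convergent case, fix $\epsilon > 0$ and choose $N \geq 2$ with $v_p(a_k) \geq (C - \epsilon)\log_p k$ for all $k > N$. For any $i > N$ with $(i,p) = 1$ and any $j \geq 0$ we have $ip^j \geq i > N$, so
\[
v_p(a_{ip^j}) \;\geq\; (C - \epsilon)\log_p\bigl(ip^j\bigr) \;=\; (C - \epsilon)\bigl(\log_p i + j\bigr) \;\geq\; (C - \epsilon)\log_p i ,
\]
and therefore $v_p(b_i) \geq (C - \epsilon)\log_p i$. Hence $\liminf_i v_p(b_i)/\log_p i \geq C - \epsilon$, and letting $\epsilon \to 0$ shows $\psi_p(g(\lambda))$ is $C\log$-convergent. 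I do not anticipate any real difficulty in this argument; the closest thing to a subtlety is checking that the defining sum for each $b_i$ converges and the bookkeeping with the finitely many small indices, both handled uniformly by the observation that $i > N$ forces every $ip^j > N$. In fact the argument shows $\psi_p$ does not degrade the convergence rate at all, since $ip^j \geq i$ and $\log_p(ip^j) \geq \log_p i$ for all $j \geq 0$.
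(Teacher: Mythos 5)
Your argument is correct and is exactly the paper's (the paper simply declares the lemma obvious, for the same reason you make precise: $\sigma^{-j}$ preserves $v_p$, the ultrametric inequality bounds $v_p(b_i)$ below by $\min_j v_p(a_{ip^j})$, and since $ip^j \geq i$ any lower bound on $v_p(a_k)$ that is nondecreasing in $k$ transfers to $b_i$). Nothing further is needed.
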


\begin{proof}
This is obvious.  Note that for both of these conditions, the requirement on the coefficient of $\lambda^i$ is stricter when $i$ is larger.  Because $\psi_p$ potentially decreases the $i$ exponents, it will preserve overconvergence and $C\log$-convergence.
\end{proof}

The following lemma is the reason Definition \ref{psi Def} is useful.

\begin{Lem}
Let $c(x) \in  \ZZ_{q}\langle x \rangle$ denote a convergent power series.  Let $\overline{\lambda} \in \overline{\FF_p}$ denote some nonzero element of degree $d$ over $\FF_q$, let $\lambda$ denote its Teichm\"uller lift, and let $\Tr$ denote the absolute trace from $\ZZ_{q^d}$ to $\ZZ_p$.  Then we have 
\[
\Tr(c(\lambda)) = \Tr(\psi_p(c)(\lambda)).
\]
\end{Lem}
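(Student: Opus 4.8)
The plan is to reduce the identity to a one-monomial computation and then reindex. First I would record the elementary convergence facts. Since $c(x)=\sum_{k\ge 0}a_k\lambda^{k}$ is convergent, $v_p(a_k)\to\infty$; since $|\lambda|_p=1$ and $\Tr$ is a sum of automorphisms of $\QQ_{q^d}$ (each of which preserves the valuation), we have $v_p(\Tr(y))\ge v_p(y)$, so $\sum_k\Tr(a_k\lambda^k)$ converges $p$-adically. By continuity and additivity of $\Tr$ this gives $\Tr(c(\lambda))=\sum_{k\ge 0}\Tr(a_k\lambda^{k})$, and likewise $\Tr(\psi_p(c)(\lambda))=\sum_{(i,p)=1}\Tr(b_i\lambda^{i})$, where each $b_i=\sum_{j\ge 0}\sigma^{-j}(a_{ip^j})$ is itself a $p$-adically convergent sum lying in $\ZZ_q$ (the latter because $\sigma^{a}$ fixes $\ZZ_q$, so $\sigma^{-j}$ makes sense on $\ZZ_q$).

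The key point is the single-monomial identity: for $(i,p)=1$ and $j\ge 0$,
\[
\Tr\!\big(a_{ip^j}\,\lambda^{ip^j}\big)=\Tr\!\big(\sigma^{-j}(a_{ip^j})\,\lambda^{i}\big).
\]
To prove it, recall that $\sigma$, the Frobenius of $\text{Gal}(\QQ_{q^d}/\QQ_p)$, acts on the root of unity $\lambda$ by $\sigma(\lambda)=\lambda^{p}$, so $\sigma^{j}(\lambda^{i})=\lambda^{ip^{j}}$ holds literally; hence $a_{ip^j}\lambda^{ip^j}=a_{ip^j}\,\sigma^{j}(\lambda^{i})$. Since $\Tr$ is the sum over all of $\text{Gal}(\QQ_{q^d}/\QQ_p)$, it satisfies $\Tr\circ\sigma^{m}=\Tr$ for every $m$, so inserting $\sigma^{-j}$ inside the trace yields $\Tr(a_{ip^j}\sigma^{j}(\lambda^{i}))=\Tr(\sigma^{-j}(a_{ip^j})\,\lambda^{i})$, as claimed.

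Next I would reindex. Every integer $k\ge 1$ is uniquely $k=ip^{j}$ with $(i,p)=1$ and $j\ge 0$, and the terms $\Tr(a_{ip^j}\lambda^{ip^j})$ tend $p$-adically to $0$, so by non-archimedean rearrangement
\[
\sum_{k\ge 1}\Tr(a_k\lambda^{k})=\sum_{(i,p)=1}\sum_{j\ge 0}\Tr\!\big(\sigma^{-j}(a_{ip^j})\,\lambda^{i}\big)=\sum_{(i,p)=1}\Tr\!\Big(\Big(\sum_{j\ge 0}\sigma^{-j}(a_{ip^j})\Big)\lambda^{i}\Big)=\sum_{(i,p)=1}\Tr(b_i\lambda^{i}),
\]
where moving $\Tr$ and multiplication by the fixed element $\lambda^{i}$ past the inner sum is justified by continuity. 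The right-hand side is exactly $\Tr(\psi_p(c)(\lambda))$. This proves the lemma once the $k=0$ term is addressed: as $\psi_p$ produces no constant term, the stated equality requires $\Tr(a_0)=0$, which holds in all our applications since $c$ will be of the form $\plog(g)/\plog(1+p)$ with $g\in 1+\lambda\ZZ_q[[\lambda]]$, hence has zero constant term; I would either add this hypothesis to the statement or remark on it.

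The work here is essentially bookkeeping rather than a deep obstacle: the genuine content is the identity $\sigma^{j}(\lambda^{i})=\lambda^{ip^{j}}$ together with $\Tr$-invariance under $\sigma$. The only points needing care are that $\sigma$ is the order-$ad$ \emph{absolute} Frobenius (so the power-of-$\lambda$ relation is an honest equality, not just a congruence), that the double sum over $(i,j)$ converges and may be rearranged (immediate from $v_p(a_k)\to\infty$), and the constant-term edge case noted above.
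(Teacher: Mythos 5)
Your proof is correct and is essentially the paper's own argument: the paper likewise reduces, by additivity and $p$-adic continuity of $\Tr$, to the single monomial identity $\Tr(c\,\lambda^{p i}) = \Tr(\sigma^{-1}(c)\,\lambda^{i})$, which holds because $\sigma(\lambda)=\lambda^{p}$ exactly on Teichm\"uller lifts and $\Tr\circ\sigma=\Tr$; you have simply written out the iteration over $j$ and the reindexing $k=ip^{j}$ in full. Your remark about the constant term is a fair catch the paper glosses over --- since $\psi_p$ discards $a_0$, the stated equality needs $\Tr(a_0)=0$, which does hold in the only place the lemma is applied, where $c=\plog(g)/\plog(1+p)$ has zero constant term.
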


\begin{proof}
Because trace is additive (and $p$-adically continuous), it suffices to show that $\Tr(\sigma^{-1}(c_i) \lambda^{i}) = \Tr(c_i \lambda^{pi})$ for all $i$, 
which is obviously true by the definition of absolute trace. 
\end{proof}

\begin{Lem} \label{61212Lem}
Let $\sum c_i \lambda^i \in 1 + p\lambda \ZZ_q[[\lambda]]$ denote a $C\log$-convergent series.  Then $\plog \left( \sum c_i \lambda^i \right)$ is also $C\log$-convergent.
\end{Lem}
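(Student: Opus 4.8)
The plan is to expand $\plog$ as a power series and bound its coefficients directly. Write $g(\lambda) = 1 + u(\lambda)$ with $u(\lambda) = \sum_{i \geq 1} c_i \lambda^i$; by hypothesis every $c_i$ with $i \geq 1$ lies in $p\ZZ_q$ and $\liminf_{i \to \infty} v_p(c_i)/\log_p i \geq C$. Then
\[
\plog(g(\lambda)) = \sum_{n = 1}^{\infty} \frac{(-1)^{n-1}}{n}\, u(\lambda)^n =: \sum_{k = 1}^{\infty} d_k \lambda^k .
\]
Since $u(\lambda)^n \in p^n \lambda^n \ZZ_q[[\lambda]]$ and $v_p(p^n/n) = n - v_p(n) \geq n - \log_p n \geq 0$, each summand $\tfrac{(-1)^{n-1}}{n}u(\lambda)^n$ lies in $\lambda^n\ZZ_q[[\lambda]]$, so $d_k$ is a finite sum of elements of $\ZZ_q$ and $\plog(g(\lambda)) \in \ZZ_q[[\lambda]]$; it then remains to show that for every $\epsilon$ with $0 < \epsilon < C$ one has $v_p(d_k) \geq (C - \epsilon)\log_p k$ for all but finitely many $k$.

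First I would record the basic estimate. Writing $P_{n,k}$ for the sum of $c_{i_1}\cdots c_{i_n}$ over all compositions $i_1 + \cdots + i_n = k$ with each $i_j \geq 1$, we have $d_k = \sum_{n=1}^{k} \frac{(-1)^{n-1}}{n} P_{n,k}$, hence
\[
v_p(d_k) \ \geq\ \min_{1 \leq n \leq k} \bigl( v_p(P_{n,k}) - \log_p n \bigr) \ \geq\ \min_{1 \leq n \leq k} \bigl( \mu_n(k) - \log_p n \bigr),
\]
where $\mu_n(k)$ denotes the minimum of $v_p(c_{i_1}) + \cdots + v_p(c_{i_n})$ over those compositions. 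The combinatorial input is that in a composition of $k$ into $n$ positive parts the largest part is $\geq k/n$, while each of the other parts contributes at least $1$ to the valuation since $c_i \in p\ZZ_q$; thus $\mu_n(k) \geq (n-1) + \nu(\lceil k/n \rceil)$, where $\nu(m) := \min_{i \geq m} v_p(c_i)$. From $C\log$-convergence there is a threshold $M = M(\epsilon)$ with $v_p(c_i) \geq (C - \tfrac{\epsilon}{2})\log_p i$ for all $i > M$, hence $\nu(m) \geq (C - \tfrac{\epsilon}{2})\log_p m$ whenever $m > M$.

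Next I would split the minimum over $n$ into two ranges. For $n \leq k/(M+1)$ we have $\lceil k/n \rceil > M$, so
\[
\mu_n(k) - \log_p n \ \geq\ (n-1) + (C - \tfrac{\epsilon}{2})(\log_p k - \log_p n) - \log_p n \ \geq\ (C - \tfrac{\epsilon}{2})\log_p k - K_0 ,
\]
where $K_0 := \sup_{n \geq 1}\bigl[(C+1)\log_p n - (n-1)\bigr]$ is finite because a linear function eventually dominates a logarithm. For $n > k/(M+1)$ the crude bound $\mu_n(k) \geq n$ already gives $\mu_n(k) - \log_p n > \tfrac{k}{M+1} - \log_p k$. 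Combining the two ranges,
\[
v_p(d_k) \ \geq\ \min\Bigl( (C - \tfrac{\epsilon}{2})\log_p k - K_0,\ \tfrac{k}{M+1} - \log_p k \Bigr),
\]
and for all sufficiently large $k$ both quantities exceed $(C - \epsilon)\log_p k$ --- the first because $\tfrac{\epsilon}{2}\log_p k \to \infty$, the second again because a linear function beats a logarithm. As $\epsilon \in (0,C)$ was arbitrary, $\liminf_k v_p(d_k)/\log_p k \geq C$, which is the assertion.

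The main obstacle to keep in view is the factor $1/n$ in the logarithm series, which contributes $-v_p(n)$; this is exactly where the hypothesis $g(\lambda) \in 1 + p\lambda\ZZ_q[[\lambda]]$ (rather than merely $1 + \lambda\ZZ_q[[\lambda]]$) is essential, since then each of the $n$ factors of $u(\lambda)^n$ supplies an extra unit of valuation, comfortably absorbing $v_p(n) \leq \log_p n$. A secondary point is the case split at $n \approx k/M$: when $n$ is a large fraction of $k$ the $C\log$-estimate on the largest coefficient $c_{i_{j_0}}$ is weak because $k/n$ need not be large, but then $n$ itself is large and the trivial estimate $\mu_n(k) \geq n$ takes over.
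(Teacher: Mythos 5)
Your proof is correct. Both you and the paper proceed by brute-force estimation of the Taylor coefficients of $\plog$, exploiting the same two inputs --- the uniform bound $v_p(c_i)\geq 1$ (which absorbs the $-v_p(n)\geq -\log_p n$ loss from the $1/n$ in the logarithm series) and the $C\log$ bound on high-index coefficients --- together with a dichotomy between compositions dominated by small indices and those containing a large index. The bookkeeping, however, is genuinely different. The paper groups monomials by the multiplicity vector $(k_1,\dots,k_r)$ with $\sum jk_j=k$, and its key combinatorial step is the inequality $\sum_{j\geq s}k_j\log_p j\geq \log_p\bigl(\sum_{j\geq s}jk_j\bigr)$ (i.e.\ a product of integers $\geq 2$ dominates their sum), splitting on whether $\sum_{j\geq s}jk_j\geq k/2$. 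You instead group by the number of factors $n$, extract the $C\log$ contribution from only the single largest part via the pigeonhole bound $i_{\max}\geq k/n$, and pay for the resulting $-\,(C-\tfrac{\epsilon}{2}+1)\log_p n$ with the $(n-1)$ coming from the remaining parts, splitting on $n\lessgtr k/(M+1)$. Your version is arguably a bit cleaner: it needs only ``a linear function eventually dominates a logarithm'' (the finiteness of $K_0$) rather than the product-versus-sum inequality, and the threshold $M(\epsilon)$ enters only through the range of $n$, not through a reweighting of the hypothesis (the paper builds an extra ``$+1$'' into its assumed lower bound on $v_p(c_k)$ for this purpose). Both arguments deliver the same conclusion with the same strength.
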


\begin{proof}
By our assumption, for every $\epsilon$ in the range $C > \epsilon > 0$, there exists a positive integer constant $M(\epsilon)  \geq 2$ such that $v_p(c_k) \geq (C- \epsilon) \log_p(k) + 1$ for all $k \geq M(\epsilon)$.  (The reason for the ``$+1$'' will become apparent later.)  Write 
\[
\plog \left( \sum c_k \lambda^k \right) =: \sum d_k \lambda^k.
\]
Our goal is to show that for every $\epsilon$ there exists a constant $N(\epsilon)$ such that $v_p(d_k) \geq (C - \epsilon)\log_p(k)$ whenever $k> N(\epsilon)$.

Using the Taylor series expansion for $\plog$, we have 
\[
\sum d_k \lambda^k = \sum_{k = 1}^{\infty} ~\sum_{k_1 + 2k_2 + \cdots + rk_r = k} \frac{(-1)^{k_1+\cdots +k_r-1}}{k_1 + k_2 + \cdots + k_r} \binom{k_1+\cdots +k_r}{k_1, k_2,\ldots,k_r} c_1^{k_1} \cdots c_r^{k_r} \lambda^k.
\]
Because multinomial coefficients are integers, we have
\[
v_p(d_k) \geq k_1 v_p(c_1) + \cdots + k_r v_p(c_r) - v_p(k_1 + \cdots + k_r).
\]
Note that we always have $v_p(c_i) \geq 1$.  Abbreviate the index $M(\epsilon)\geq 2$ chosen above by $s$.  Then we have 
\[
v_p(d_k) \geq k_1 + \cdots + k_{s-1} + \left[k_s(C-\epsilon)\log_p(s) + k_s\right] + \cdots + \left[k_r (C-\epsilon)\log_p(r)+k_r\right] \] 
 \[ - v_p(k_1 + k_2 + \cdots + k_r) \] 
 \[ \geq \frac{1}{2}(k_1+\cdots +k_r) + (C-\epsilon) \sum_{j=s}^r  k_j\log_p j\] 
 \[ \geq \frac{1}{2}(k_1+\cdots +k_r) + (C-\epsilon) \log_p (\sum_{j=s}^r  j k_j )\]
 If  $\sum_{j=s}^r  j k_j  \geq k/2$, then 
\[ v_p(d_k) \geq (C-\epsilon) \log_p (\frac{1}{2} k) = (C-\epsilon) \log_p k   -  (C-\epsilon) \log_p 2. \] 
If $\sum_{j=s}^r  j k_j  < k/2$, then $\sum_{j=1}^{s-1}  j k_j  \geq k/2$ and thus 
\[ v_p(d_k) \geq  \frac{1}{2}(k_1+\cdots +k_{s-1}) \geq \frac{1}{4(s-1)} k > (C-\epsilon) \log_p k \] 
for all sufficiently large $k$. This proves that 
\[ \lim_k \inf \frac{v_p(d_k)}{\log_p k} \geq C-\epsilon.\]
The theorem is proved. 

\end{proof}

When we assemble the above results, we attain the following result.  It describes the function $G$ promised at the beginning of this section.

\begin{Prop} \label{61212Prop}
Let $g(\lambda)$ denote a $C\log$-convergent series.  Convert this into the power series $\psi_p \circ \plog (g(\lambda))$, and let $d_i$ denote the coefficient of $\lambda^i$ in the new power series.  Then the sequence $(d_i)_{(i,p) = 1}$ is $C\log$-convergent and its associated $L$-series is the same as the $L$-series associated to $g(\lambda)$.  
\end{Prop}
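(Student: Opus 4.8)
The plan is to assemble the three lemmas proved in this section. Following the normalization made explicit in the paragraph before Definition \ref{psi Def}, I would set $d(\lambda) := \psi_p\!\left(\plog(g(\lambda))/\plog(1+p)\right)$ and let $(d_i)_{(i,p)=1}$ be its coefficients; since $\psi_p$ produces a series supported on exponents prime to $p$, these really are all of its coefficients. Two things must then be checked: that $(d_i)$ is $C\log$-convergent, and that the character attached to $(d_i)$ by Proposition \ref{41712Prop} equals the character attached to $g(\lambda)$ by Definition \ref{61112Def}. The latter suffices for the $L$-functions to agree, as each $L$-function is determined by its underlying character.

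For the convergence statement, I would first reduce to the case $g(\lambda) \in 1 + p\lambda\ZZ_q[[\lambda]]$, which holds for every series in the image of $F$ and is the case of interest. Then Lemma \ref{61212Lem} gives that $\plog(g(\lambda))$ is $C\log$-convergent. Dividing by the constant $\plog(1+p)$, which for $p$ odd equals $p$ times a unit of $\ZZ_p$, keeps the coefficients in $\ZZ_q$ and merely lowers every coefficient valuation by $1$, hence does not change $\liminf v_p(\,\cdot\,)/\log_p(\,\cdot\,)$; so $\plog(g(\lambda))/\plog(1+p)$ is again $C\log$-convergent. Finally, the lemma immediately preceding Lemma \ref{61212Lem} (that $\psi_p$ preserves over- and $C\log$-convergence) shows $d(\lambda)$, hence $(d_i)$, is $C\log$-convergent.

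For the equality of characters, fix an irreducible $f_\lambda(t)$ of degree $d$ with Teichm\"uller lift $\lambda \in \ZZ_{q^d}$ and write $\Tr = \Tr_{\ZZ_{q^d}/\ZZ_p}$. By Proposition \ref{41712Prop} the character of $(d_i)$ sends $f_\lambda$ to $\prod_{(i,p)=1}(1+p)^{\Tr(d_i\lambda^i)} = (1+p)^{\Tr(d(\lambda))}$, using additivity and $p$-adic continuity of $\Tr$. By the lemma on traces through $\psi_p$ (the one just before Lemma \ref{61212Lem}), $\Tr(d(\lambda)) = \Tr\!\left(\plog(g(\lambda))/\plog(1+p)\right) = \plog(1+p)^{-1}\,\Tr(\plog(g(\lambda)))$. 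Now $\Tr(\plog(g(\lambda))) = \sum_{k=0}^{ad-1}\sigma^k(\plog(g(\lambda)))$, and since $\sigma$ is a continuous ring automorphism with $\sigma(\lambda) = \lambda^p$ one has $\sigma^k(\plog(g(\lambda))) = \plog(g^{\sigma^k}(\lambda^{p^k}))$; summing and using that $\plog$ turns products into sums gives $\Tr(\plog(g(\lambda))) = \plog\!\left(g(\lambda)g^{\sigma}(\lambda^p)\cdots g^{\sigma^{ad-1}}(\lambda^{p^{ad-1}})\right)$. The argument of $\plog$ is exactly $\chi_g(f_\lambda)$, the value at $f_\lambda$ of the character of $g$ from Definition \ref{61112Def}. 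Hence the character of $(d_i)$ sends $f_\lambda$ to $(1+p)^{\plog(\chi_g(f_\lambda))/\plog(1+p)} = \exp(\plog(\chi_g(f_\lambda))) = \chi_g(f_\lambda)$, where the last equality uses that $\chi_g(f_\lambda)$ is a $1$-unit and $p$ is odd. Since this holds for every $f_\lambda$, the two characters coincide and so do their $L$-functions.

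The serious estimate — the $C\log$-convergence of $\plog(g)$ — has already been packaged as Lemma \ref{61212Lem}, so within this proof the only points demanding care are bookkeeping: that dividing by $\plog(1+p)$ neither breaks $C\log$-convergence nor escapes $\ZZ_q$ (this is where $p > 2$ enters), and the intertwining identity $\sigma^k(\plog g(\lambda)) = \plog(g^{\sigma^k}(\lambda^{p^k}))$, which rests on the Teichm\"uller relation $\sigma(\lambda) = \lambda^p$ together with the continuity of $\sigma$ and $\plog$, and which is what lets the absolute trace of $\plog(g(\lambda))$ be rewritten as $\plog$ of the defining product of Definition \ref{61112Def}. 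I expect this last identity to be the step most worth writing out in full.
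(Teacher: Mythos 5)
Your proof is correct and is precisely the assembly of the section's three lemmas that the paper itself intends (the paper offers no argument beyond ``when we assemble the above results''). Your decision to divide by $\plog(1+p)$, following the normalization in the paragraph before Definition \ref{psi Def} rather than the literal statement of the proposition, is the right one --- without it the two characters would differ by the exponent $\plog(1+p)$ --- and your restriction to $g(\lambda) \in 1 + p\lambda\ZZ_q[[\lambda]]$ matches the implicit hypothesis already present in Lemma \ref{61212Lem}.
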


We close this section with a special example.

\begin{Eg} 
Let $q = p$ and let $C > 0$ denote some constant. 
Power series were constructed in \cite[Theorem 1.2]{Wan96} which were $C\log$-convergent and whose associated $L$-functions failed to have meromorphic continuation to the disk $|s| < p^{C + \epsilon}$ for any $\epsilon$.  We briefly mention some implications in our context.   Define
\[
g_C(\lambda) = 1 + \sum_{i \geq 1} p^{Ci+1}u_i \lambda^{p^{i}-1}, 
\]
where $u_i \in \mathbb{Z}$ is such that the reduction modulo $p$ of $\sum_i u_i t^i$ is not in $\FF_p(t)$. 
This series $g_C(\lambda)$ is clearly $C\log$-convergent, so by Proposition \ref{61212Prop}, the associated sequence $(d_i)_{(i,p) = 1}$ is $C\log$-convergent.  On the other hand, we know by \cite{Wan96} that the associated $L$-function is meromorphic in the disk $|s|_p <p^C$ but not meromorphic 
in any larger disk $|s|_p < p^{C+\epsilon}$ for any $\epsilon > 0$.  Then by Theorem \ref{log Thm}, the associated sequence $(d_i)_{(i,p) = 1}$ is not $(C+\epsilon)$-convergent.
\end{Eg}

\subsection*{Acknowledgments} The authors thank James Borger, Lars Hesselholt, Kiran Kedlaya, Jack Morava, Tommy Occhipinti, and Liang Xiao for many useful discussions.

\bibliography{padicChar}
\bibliographystyle{plain}

\end{document}